\theoremstyle{plain}
\newtheorem{thm}{Theorem}[section]
\newtheorem{cor}[thm]{Corollary}
\newtheorem{prop}[thm]{Proposition}
\newtheorem*{thm*}{Theorem}
\newtheorem{fact}[thm]{Fact}
\newtheorem{lem}[thm]{Lemma}
\theoremstyle{definition}
\newtheorem{dfn}[thm]{Definition}
\newtheorem{exam}[thm]{Example}
\newtheorem{rem}[thm]{Remark}
\providecommand{\keywords}[1]
{
  \small	
  \textbf{\textit{Keywords---}} #1
}
\newcommand{\Hom}{\mathop{\mathrm{Hom}}\nolimits}
\newcommand{\Ker}{\mathop{\mathrm{Ker}}\nolimits}
\newcommand{\diag}{\mathop{\mathrm{diag}}\nolimits}
\newcommand{\ad}{\mathop{\mathrm{ad}}\nolimits}
\newcommand{\Resub}{\mathop{\mathrm{Re}}\nolimits}
\newcommand{\sltwo}{\mathfrak{sl}_2}
\newcommand{\Oinf}{ \tilde{\mathcal{O}} }
\newcommand{\muone}{\mu'}
\newcommand{\mutwo}{\mu''}
\newcommand{\lambdaone}{\lambda'}
\newcommand{\lambdatwo}{\lambda''}
\newcommand{\lambdathree}{\lambda'''}
\title{Branching problem of tensoring two Verma modules and its application to differential symmetry breaking operators}
\author{Reiji Murakami\thanks{Graduate School of Mathematical Science, The University of Tokyo}}
\date{\today}
\begin{document}
\maketitle
\begin{abstract}
Kobayashi-Pevzner discovered in [Selecta Math., 2016] that the failure of the multiplicity-one property in the fusion rule of Verma modules of $\sltwo$ occurs exactly when the Rankin-Cohen bracket vanishes, and classified all the corresponding parameters. In this paper we provide yet another characterization for these parameters, and give a precise description of indecomposable components of the tensor product. Furthermore, we discuss when the tensor products of two Verma modules are isomorphic to each other for semisimple Lie algebras $\mathfrak{g}$.

%これによって,対称性破れ作用素の次元が増えるときに,Verma module の対称性(duality) が破れていることもわかった.
\end{abstract}

\keywords{branching laws, symmetry breaking operator, tensor product, Verma module, BGG category, Lie algebra}

\section{Introduction.}
\noindent

%我々は,category $\mathcal{O}$の二つのVerma 加群をテンソルした加群の構造に関する問を考察した.特に,$sl_2(\mathbb{C})$の場合を完全に決定することで[2\ thm9.1]の同値条件$(i)$~$(iii)$に新たに$(iv)$を付け加えることに成功した.

The object of this article is the tensor product of two Verma modules. Recently, a new phenomenon has been discovered in the $\sltwo$-case, including the failure of the multiplicity-freeness in the fusion rule for \textit{singular} integral parameters, see \cite[Sect. 9]{KP} for details. This article provides a precise description of indecomposable components of the tensor product in the $\sltwo$-case (Theorem \ref{strtheorem}), and proves that the failure of the self-duality of indecomposable components corresponds exactly to the multiplicity-two phenomenon (Theorem \ref{KPtheorem}).
%In this paper, by determining tensor product structure, we show an increasing dimension of differential symmetry-breaking operators corresponding to collapsing self-duality on the tensor product as Theorem \ref{KPtheorem}.
%In particular, we give a new $(iv)$ to the equivalence conditions from $(i)$ to $(iii)$ of [KP,\ Thm9.1] by completely determining the case $\sltwo(\mathbb{C})$.

As an introduction, we begin with a general result of the tensor product of two Verma modules. Let $\mathfrak{g}$ be a complex semisimple Lie algebra, $\mathfrak{h}$ a Cartan subalgebra, and $\mathfrak{b}$ a Borel subalgebra containing $\mathfrak{h}$. Let $M(\mu):=U(\mathfrak{g})\otimes_{U(\mathfrak{b})} \mathbb{C}_\mu$ be the Verma module for $\mu\in \mathfrak{h}^*$, see Section \ref{notation} for further notation.
%A weight $\lambda\in \mathfrak{h}^*$ is called anti-dominant if $M(\lambda)$ is simple.

\begin{thm}\label{generaltheorem}
Suppose that $\mu',\mu'',\nu',\nu''\in \mathfrak{h}^{*}$ satisfy $\mu'+\mu''=\nu'+\nu''$. If each of the two sets $\{\mu',\mu'' \}$ and $\{\nu',\nu'' \}$
contains at least one anti-dominant element, then one has an isomorphism as $\mathfrak{g}$-modules:
\begin{align*}
M(\mu')\otimes M(\mu'')\cong M(\nu')\otimes M(\nu'').
\end{align*}
\end{thm}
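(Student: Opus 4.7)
My plan is to apply the tensor identity to both sides of the desired isomorphism, reducing it to a comparison of two modules parabolically induced from $\mathfrak{b}$, and then to construct an explicit $\mathfrak{g}$-module isomorphism between them. Anti-dominance will enter at the very end to guarantee that the denominators produced in the construction do not vanish.

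First, by the commutativity of the tensor product I reduce to the case where $\mu''$ and $\nu''$ are the anti-dominant members of their respective pairs. Applying the tensor identity (projection formula) for the Hopf-algebra inclusion $U(\mathfrak{b}) \subset U(\mathfrak{g})$ gives a natural $\mathfrak{g}$-module isomorphism
\begin{equation*}
M(\mu') \otimes M(\mu'') \;\cong\; U(\mathfrak{g}) \otimes_{U(\mathfrak{b})} \bigl(\mathbb{C}_{\mu'} \otimes M(\mu'')|_\mathfrak{b}\bigr)
\end{equation*}
and the analogous formula with $(\nu', \nu'')$ in place of $(\mu', \mu'')$. Thus the theorem reduces to building a $\mathfrak{g}$-isomorphism $\phi$ between the two induced modules on the right-hand side. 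By PBW both modules share, as $\mathfrak{h}$-graded vector spaces, the character $e^{\mu'+\mu''} \prod_{\alpha \in \Delta^+}(1-e^{-\alpha})^{-2}$, which depends only on $\mu'+\mu''$.

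To construct $\phi$, I fix PBW bases on each side and define $\phi$ weight-by-weight by assigning rational coefficients in $\mu', \mu'', \nu', \nu''$ to its matrix entries. Compatibility with the $\mathfrak{h}$- and $\mathfrak{n}^-$-actions is automatic (weight-preservation handles the first; free generation over $U(\mathfrak{n}^-)$ handles the second). The substantive condition is commutation with $\mathfrak{n}^+$, which I plan to convert into a recursive linear system for the coefficients of $\phi$, to be solved by induction on the height of the weight below $\mu'+\mu''$.

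The main obstacle is ensuring the recursion is consistent and produces an invertible $\phi$. Here the anti-dominance hypothesis is decisive: it forces the denominators arising in the recursion, typically of the form $\langle \mu''+\rho, \alpha^\vee\rangle - k$ and $\langle \nu''+\rho, \alpha^\vee\rangle - k$ for $\alpha \in \Delta^+$ and $k \in \mathbb{Z}_{>0}$, to be non-zero. This lets the inductive step proceed and permits the diagonal entries of $\phi$ in the PBW basis to be normalized to be non-zero, yielding the desired $\mathfrak{g}$-module isomorphism.
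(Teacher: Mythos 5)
Your reduction via the tensor identity and the observation that both sides have the same formal character (equivalently, the same class in the Grothendieck group, which only uses $\mu'+\mu''=\nu'+\nu''$) is correct and agrees with the paper's Proposition \ref{eqgro} and Corollary \ref{grocor}. But the heart of the theorem is precisely the step you leave unproved: that under the anti-dominance hypothesis the module is actually determined by this character data. Your plan is to build $\phi$ weight-by-weight and solve a ``recursive linear system'' coming from commutation with $\mathfrak{n}^+$, asserting that anti-dominance keeps certain denominators non-zero. Non-vanishing of denominators is not the issue; the issue is consistency and solvability of the system at each weight space, where the intertwining conditions for the raising operators are in general overdetermined relative to the free parameters of $\phi$. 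You give no argument that a solution exists, and Example \ref{counter} of the paper shows exactly why one is needed: $M(0)\otimes M(0)$ and $M(i)\otimes M(-i)$ have identical characters and Grothendieck classes, yet are not isomorphic ($P(-2)\not\cong M(0)\oplus M(-2)$). So any proof must convert anti-dominance into a structural property of the tensor product, not merely into non-vanishing of some coefficients, and your sketch never does this.

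The paper's route supplies exactly the missing structural input: anti-dominance of one parameter makes that Verma module simple, hence a tilting module (it lies in $\bigtriangleup\cap\bigtriangledown$); by K{\aa}hrstr\"{o}m's result (Fact \ref{Johanthm}) tensoring a tilting module with any object of $\mathcal{O}$ again yields a module in $\bigtriangleup\cap\bigtriangledown$; and tilting modules are determined up to isomorphism by their class in the Grothendieck group. Combined with the character identity you already have, this finishes the proof in three lines. If you want to keep a constructive flavor, you would still need to import some rigidity statement of this kind to justify that your recursion closes; as written, the proposal has a genuine gap at its central step.
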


The anti-dominant assumption cannot be dropped, as we shall see in Example \ref{counter}.
In general, it is a hard problem to give a precise description of the decomposition of $M(\mu')\otimes M(\mu'')$, in particular, because the tensor product is not necessarily completely reducible even when both $M(\mu')$ and $M(\mu'')$ are irreducible. \\

Let $\mathfrak{g}:=\sltwo(\mathbb{C})$. We identify $\mathfrak{h}^{*}$ with $\mathbb{C}$ such that the unique positive root is given by $2$ as usual.
Then, the Verma module $M(\mu)$ has a $(\mu+1)$-dimensional simple quotient $L(\mu)$ if $\mu \in \mathbb{N}:=\{0,1,2,\cdots \}$. Following the notation as in \cite[p. 894]{KP}, we define the \textit{multiplicity space} of $M(-\lambdathree)$ in $M(-\lambdaone)\otimes M(-\lambdatwo)$ by:
\begin{align*}
H(\lambdaone,\lambdatwo,\lambdathree):=\Hom_{\mathfrak{g}}(M(-\lambdathree),M(-\lambdaone)\otimes M(-\lambdatwo)).
\end{align*}

Kobayashi-Pevzner \cite[Thm. 9.1]{KP} discovered that the Rankin-Cohen brackets $\mathcal{RC}_{\lambdaone,\lambdatwo}^{\lambdathree}$ (see \eqref{eqn:RC} below) vanishes if and only if $H(\lambdaone,\lambdatwo,\lambdathree)$ is of dimension $2$, and found all the triples $\{\lambda',\lambda'',\lambda'''\}$ of parameters when this happens. In Theorem \ref{KPtheorem}, we give yet another characterization of this condition 
by means of the $\mathfrak{g}$-module structure of the $Z(\mathfrak{g})$-primary component  $(M(-\lambdaone)\otimes M(-\lambdatwo))^{\chi_{-\lambda'''+1}}$ corresponding to the generalized infinitesimal character $-\lambda'''+1$, see Section \ref{notation}.

\begin{thm}\label{KPtheorem}
If $\lambdathree-\lambdaone-\lambdatwo \in 2\mathbb{N}$, then the following four conditions on the triple $(\lambda', \lambda'', \lambda''')$ are equivalent:\\
(i) $\dim H(\lambdaone,\lambdatwo,\lambdathree)=2$.\\
(ii) $\lambdaone,\lambdatwo,\lambdathree \in \mathbb{Z},\ 2\geq \lambdaone+\lambdatwo+\lambdathree,\ and\ \lambdathree\geq | \lambdaone-\lambdatwo| +2$.\\
(iii) $\mathcal{RC}_{\lambdaone,\lambdatwo}^{\lambdathree}=0$.\\
(iv) $2\leq \lambdathree$ and $(M(-\lambdaone)\otimes M(-\lambdatwo))^{\chi_{-\lambda'''+1}}=M(-\lambdathree)\oplus M(\lambdathree-2)$.
\end{thm}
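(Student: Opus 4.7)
The chain (i)$\Leftrightarrow$(ii)$\Leftrightarrow$(iii) is \cite[Thm.~9.1]{KP}, so the new content is grafting (iv) onto the loop. My approach is to prove (i)$\Leftrightarrow$(iv) by determining the isomorphism class of the primary component
\[
N := (M(-\lambdaone)\otimes M(-\lambdatwo))^{\chi_{-\lambdathree+1}}
\]
in category $\mathcal{O}$; the remaining implications then come for free from KP.

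First, I would equip $M(-\lambdaone)\otimes M(-\lambdatwo)$ with its standard Verma flag via the tensor identity
\[
M(-\lambdaone)\otimes M(-\lambdatwo)\cong U(\mathfrak{g})\otimes_{U(\mathfrak{b})}\bigl(\mathbb{C}_{-\lambdaone}\otimes M(-\lambdatwo)|_{\mathfrak{b}}\bigr)
\]
and the ascending $\mathfrak{b}$-filtration $F^{k}=\mathrm{span}\{v'',fv'',\dots,f^{k}v''\}$ of $M(-\lambdatwo)$. Inducing yields subquotients $M(-\lambdaone-\lambdatwo-2k)$ for $k\in\mathbb{N}$, with smaller $k$ sitting as more submodule-like factors. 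For $\lambdathree\in\mathbb{Z}_{\geq 2}$ the $\sltwo$-block $\chi_{-\lambdathree+1}$ consists of exactly $M(-\lambdathree)$ and $M(\lambdathree-2)$; under (ii) both occur in the flag at heights $k_{1}=(\lambdathree-\lambdaone-\lambdatwo)/2$ and $k_{2}=(2-\lambdaone-\lambdatwo-\lambdathree)/2$ with $k_{1}-k_{2}=\lambdathree-1\geq 1$, so
\[
0\to M(\lambdathree-2)\to N\to M(-\lambdathree)\to 0.
\]

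Second, I would classify the possibilities for this extension. BGG reciprocity in the regular integral block of $\sltwo$ endows the projective cover $P(L(-\lambdathree))$ with the Verma flag $[M(\lambdathree-2),M(-\lambdathree)]$, and combined with $\mathrm{Ext}^{1}_{\mathcal{O}}(L(-\lambdathree),L(\lambdathree-2))=\mathbb{C}$ this forces $\mathrm{Ext}^{1}_{\mathcal{O}}(M(-\lambdathree),M(\lambdathree-2))=\mathbb{C}$. Hence $N$ is isomorphic either to the split sum $M(-\lambdathree)\oplus M(\lambdathree-2)$ or to the indecomposable $P(L(-\lambdathree))$, whose socles are $L(-\lambdathree)^{\oplus 2}$ and $L(-\lambdathree)$ respectively. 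This produces $\dim\Hom_{\mathfrak{g}}(M(-\lambdathree),N)=2$ in the split case and $1$ otherwise, and since any map $M(-\lambdathree)\to M(-\lambdaone)\otimes M(-\lambdatwo)$ factors through $N$, this dimension coincides with $\dim H(\lambdaone,\lambdatwo,\lambdathree)$, establishing (i)$\Leftrightarrow$(iv). The loop then closes via KP: (iv)$\Rightarrow$(i) is immediate, so KP supplies (ii), (iii); conversely (ii) triggers the Verma-flag structure above and KP's multiplicity-two forces the split alternative, giving (iv).

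\emph{The main obstacle} is the classification in the second step: confirming $\mathrm{Ext}^{1}_{\mathcal{O}}(M(-\lambdathree),M(\lambdathree-2))=\mathbb{C}$ and ruling out indecomposable modules beyond $P(L(-\lambdathree))$ with the Verma flag $[M(\lambdathree-2),M(-\lambdathree)]$. Once that is handled, the remaining arguments are routine bookkeeping in category $\mathcal{O}$.
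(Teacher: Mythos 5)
Your proposal is correct in substance, but it reaches (iv) by a genuinely different route from the paper. The paper does not argue locally on the primary component at all: it first proves the global structure theorem (Theorem \ref{strtheorem}), whose singular cases are settled by Kåhrström's result that tensoring preserves Verma flags together with the rigidity of tilting modules (a tilting module is determined by its Grothendieck class, which forces $M^{\chi_{\nu+1}}\cong P(\nu)$ in the non-split range) and, when both parameters are in $\mathbb{N}$, by splitting off the projective module $M(\muone)\otimes L(\mutwo)$. The equivalence (ii)$\Leftrightarrow$(iv) is then read off by computing the set $A'\setminus A$ explicitly, with no appeal to condition (i). Your argument instead isolates the single block $\chi_{-\lambdathree+1}$, exhibits the two-step Verma flag $0\to M(\lambdathree-2)\to N\to M(-\lambdathree)\to 0$, and converts splitness into $\dim\Hom_{\mathfrak g}(M(-\lambdathree),N)\in\{1,2\}$ via the socle, closing the loop through KP's (i)$\Leftrightarrow$(ii) to cover the parameter ranges where (ii) fails. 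What your route buys is independence from the full decomposition theorem and a more conceptual link between multiplicity two and splitness; what it costs is reliance on KP for the degenerate cases and on the deferred $\mathrm{Ext}$ computation. On that last point, your stated ``main obstacle'' is actually avoidable: you do not need $\dim\mathrm{Ext}^1_{\mathcal O}(M(-\lambdathree),M(\lambdathree-2))=1$ nor a classification of indecomposables, since applying $\Hom_{\mathfrak g}(M(-\lambdathree),-)$ to the short exact sequence shows directly that any non-split extension has $\dim\Hom_{\mathfrak g}(M(-\lambdathree),N)=1$ (the identity maps to the non-zero extension class), while the split one has dimension $2$; also note that your justification of the $\mathrm{Ext}^1$ claim from BGG reciprocity and $\mathrm{Ext}^1(L(-\lambdathree),L(\lambdathree-2))=\mathbb{C}$ is not a complete deduction as written, which is another reason to prefer the long-exact-sequence shortcut.
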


%本論文では$(i)$と$(iv)$の同値を示す.
The equivalences $(i)\Leftrightarrow(ii)\Leftrightarrow(iii)$ were established earlier in \cite[Sect. 9]{KP}. The operator $\mathcal{RC}_{\lambdaone,\lambdatwo}^{\lambdathree}$ in $(iii)$ is a bidifferential operator, referred to as the \textit{Rankin-Cohen bracket} of degree $\ell:=\frac{1}{2}(\lambda'''-\lambda'-\lambda'')\in \mathbb{N}$, defined by 
\begin{equation}\label{eqn:RC}
\mathcal{RC}_{\lambdaone,\lambdatwo}^{\lambdathree}:=\sum_{j=0}^{\ell} (-1)^{j} \frac{(\lambdaone +\ell-1)_j (\lambdatwo +\ell -1)_{\ell-j}}{j!(\ell-j)!} \frac{\partial^\ell}{\partial {z_1}^{\ell-j} \partial {z_2}^j}\Bigg\rvert_{z_1 =z_2},
\end{equation}
where $(x)_n$ stands for the descending factorial $x(x-1)\cdots (x-n+1)$. The operator was introduced originally in number theory to produce holomorphic modular forms of higher weight $\lambdathree=\lambdaone+\lambdatwo+2\ell\ (\ell\in \mathbb{N})$ out of those of lower weights $\lambdaone$ and $\lambdatwo$. 
From the viewpoint of representation theory, $\mathcal{RC}_{\lambdaone,\lambdatwo}^{\lambdathree}$ is a \textit{symmetry breaking operator} from a $\tilde{G}\times \tilde{G}$-equivalent holomorphic line bundle to a $\tilde{G}$-equivalent one over the Poincar\'e disk where $\tilde{G}$ denotes the universal covering group of $SU(1,1)$. Such an operator shows how $\tilde{G}\times \tilde{G}$-symmetry is ``broken'' in keeping the diagonal symmetry of $\tilde{G}$. See \cite[Sect. 1.1 and Sect. 9]{KP}.\\

%Theorem \ref{strtheorem} shows a decomposition of the tensor product of two Verma modules into indecomposable modules and leads to the equivalence of (i) and (iv) in Theorem \ref{KPtheorem}.
The novelty of Theorem \ref{KPtheorem} is the equivalence of $(ii) \Leftrightarrow (iv)$, which is derived from Theorem \ref{strtheorem} about a description of the $\mathfrak{g}$-module structure of the tensor product of two Verma modules.
Let $P(a)$ denote the projective covering of $M(a)$, see Definition \ref{projectivecover} and Example \ref{pjcoverex}.

\begin{thm}\label{strtheorem}
For $\muone,\mutwo\in \mathbb{C}$, one has an isomorphism as  $\mathfrak{g}$-modules:
\begin{equation*}
M(\muone)\otimes M(\mutwo)\cong \bigoplus_{a\in A}P(a) \oplus \bigoplus_{b\in B}M(b).
\end{equation*}
We set $X:=\muone+\mutwo-2\mathbb{N}$. The finite (possibly empty) set $A\equiv A(\muone,\mutwo)$ and a countable set $B\equiv B(\muone,\mutwo)$ are defined as follows:
\begin{align*}
\label{eqn:A}
A&:=
\begin{cases}
    \emptyset & if\ \muone+\mutwo\notin \mathbb{N},\\
    X\ \cap\ [-|\muone-\mutwo |,-2] & if\ \muone,\mutwo \in \mathbb{N},\\
    X\ \cap\ [-\muone-\mutwo-2,-2] & otherwise,
\end{cases}\\
B&:=X \setminus (A\cup A^*),
\end{align*}
where $A^*:=\{-a-2\in \mathbb{Z}:\ a\in A\}$.

%We set $X:=\muone+\mutwo-2\mathbb{N}$ and the countable set $B\equiv B(\muone,\mutwo)$ is defined by $X \setminus (A\cup A^*)$, where $A^*:=\{-a-2\in \mathbb{Z}:\ a\in A\}$, and the finite subset $A\equiv A(\muone,\mutwo)$ of $X$ is defined accordingly the following cases:
%ただし,$A,B$は次で定義される$:$
%We define subsets $A\equiv A(\muone,\mutwo),\ B\equiv B(\muone,\mutwo) \subset X:=\muone+\mutwo-2\mathbb{N}$. We write $A^*:=\{-a-2\in \mathbb{Z}:\ a\in A\}=-(A+1)-1$ for inversion of A around $-1$.
%We set $X:=\muone+\mutwo-2\mathbb{N}$. Here the finite subset $A\equiv A(\muone,\mutwo)\subset X$ and the countable subset $B\equiv B(\muone,\mutwo) \subset X$ are defined as follows:
%\begin{description}
   % \item[Case(1)] If $\muone+\mutwo\notin\mathbb{N}$, then $A=\emptyset$.\\
    %\item[Case(2)] If $\muone,\mutwo \in \mathbb{N}$,  then $A=[-|\muone-\mutwo |,-2]\ \cap\ X$.\\
   % \item[Case(3)] Otherwise, $A=[-\muone-\mutwo-2,-2]\ \cap\ X$.
%\end{description}
%We write $A^*:=\{-a-2\in \mathbb{Z}:\ a\in A\}=-(A+1)-1$ for inversion of A around $-1$. In all the cases, we defined B by the following:
%\begin{align*}
%B=X \setminus (A\cup A^*).
%\end{align*}
\end{thm}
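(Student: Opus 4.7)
The plan is to decompose $M(\muone)\otimes M(\mutwo)$ along generalized central characters, exhibit a natural Verma flag that pins down all Verma multiplicities, and then use a singular vector count to distinguish the ``split'' structure $M(\lambda)\oplus M(-\lambda-2)$ from the projective structure $P(-\lambda-2)$ in each regular integral block of $\mathcal{O}$.

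First, I would construct a Verma flag. Set $F_k:=U(\mathfrak{g})\cdot\{v_{\muone}\otimes f^j v_{\mutwo}:0\leq j\leq k\}$. Since $e(v_{\muone}\otimes f^k v_{\mutwo})=k(\mutwo-k+1)\,v_{\muone}\otimes f^{k-1}v_{\mutwo}\in F_{k-1}$, the class of $v_{\muone}\otimes f^k v_{\mutwo}$ in $F_k/F_{k-1}$ is a singular vector of weight $\muone+\mutwo-2k$, so $F_k/F_{k-1}$ is a quotient of $M(\muone+\mutwo-2k)$. The formal character identity $\mathrm{ch}\,M(\muone)\cdot\mathrm{ch}\,M(\mutwo)=\sum_{k\geq 0}\mathrm{ch}\,M(\muone+\mutwo-2k)$ then forces each quotient to coincide with $M(\muone+\mutwo-2k)$, so every $M(a)$ with $a\in X$ occurs with Verma multiplicity exactly one. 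Next, I would count singular vectors at weight $a=\muone+\mutwo-2k$ by writing $v=\sum_{i=0}^k c_i f^i v_{\muone}\otimes f^{k-i}v_{\mutwo}$ and imposing $e\cdot v=0$, obtaining the recursion $(j+1)(\muone-j)c_{j+1}+(k-j)(\mutwo-k+j+1)c_j=0$ for $j=0,\ldots,k-1$. The leading coefficient vanishes only at $j_2:=\muone\in\{0,\ldots,k-1\}$ and the trailing coefficient only at $j_1:=k-\mutwo-1\in\{0,\ldots,k-1\}$. A case analysis on the relative positions of $j_1,j_2$ shows that the recursion splits into two independent chains -- producing a second free parameter -- exactly when $\muone,\mutwo\in\mathbb{N}$ and $j_1\leq j_2$, i.e., $k\leq\muone+\mutwo+1$. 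Translated to the weight variable, the solution space has dimension $2$ iff $\muone,\mutwo\in\mathbb{N}$ and $a\in X\cap[-\muone-\mutwo-2,\,-|\muone-\mutwo|-2]$, and dimension $1$ otherwise.

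Finally, I would combine these pieces via $M(\muone)\otimes M(\mutwo)=\bigoplus_\chi(M(\muone)\otimes M(\mutwo))^\chi$. For a regular integral $\chi_\lambda$ with $\lambda\in\mathbb{N}$ the dominant representative of its dot-Weyl orbit and both $\lambda,-\lambda-2\in X$, the primary $N^{\chi_\lambda}$ inherits from Step~1 a Verma flag with $(N^{\chi_\lambda}:M(\lambda))=(N^{\chi_\lambda}:M(-\lambda-2))=1$. The only indecomposable Verma-flag modules in this block of $\mathcal{O}$ matching these multiplicities are $M(\lambda)\oplus M(-\lambda-2)$ and $P(-\lambda-2)$; they are distinguished by $\dim\Hom_\mathfrak{g}(M(-\lambda-2),N^{\chi_\lambda})$ being $2$ or $1$, respectively. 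Comparing with Step~2, the split case $M(\lambda)\oplus M(-\lambda-2)$ occurs exactly when $\muone,\mutwo\in\mathbb{N}$ and $\lambda\in[|\muone-\mutwo|,\muone+\mutwo]$, whereas $P(-\lambda-2)$ occurs in the complementary situation where both $\lambda,-\lambda-2\in X$; blocks containing only one weight of $X$ in their orbit contribute a single Verma $M(a)$. Collecting the summands then yields the desired formula with $A$ and $B$ as stated. The main technical hurdle will be the case analysis in Step~2, in particular verifying that a second free parameter appears precisely when $j_1\leq j_2$, even in the subtle sub-case $j_1<j_2$ where no single recursion equation degenerates to $0=0$ but the two chains $c_0\to\cdots\to c_{j_2}$ and $c_{j_2+1}\to\cdots\to c_k$ nonetheless evolve independently.
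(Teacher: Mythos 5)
Your proposal is correct in substance, but it reaches Theorem \ref{strtheorem} by a genuinely different route from the paper. The paper works almost entirely with structural results: it gets the Verma flag and the Grothendieck class from Proposition \ref{eqgro}, and then splits the argument into a generic case, a case where it invokes K{\aa}hrstr\"om's theorem that tensoring preserves $\bigtriangleup\cap\bigtriangledown$ together with the fact that a tilting module is determined by its class in the Grothendieck group (so $[M^{\chi_{\nu+1}}]=[P(\nu)]$ forces $M^{\chi_{\nu+1}}\cong P(\nu)$), and a case $\muone,\mutwo\in\mathbb{N}$ handled by the splitting of $0\to M(\muone)\otimes M(-\mutwo-2)\to M(\muone)\otimes M(\mutwo)\to M(\muone)\otimes L(\mutwo)\to 0$ via projectivity of $M(\muone)\otimes L(\mutwo)$. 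You instead treat all cases uniformly: after the Verma flag, you compute $\dim\Hom_{\mathfrak{g}}(M(a),M(\muone)\otimes M(\mutwo))$ by an explicit singular-vector recursion (essentially re-deriving the computation behind \cite[Thm.\ 9.1]{KP}), and you separate $M(\lambda)\oplus M(-\lambda-2)$ from $P(-\lambda-2)$ by this $\Hom$-dimension. I checked your recursion and the case analysis on $j_1,j_2$ (including the sub-case $j_1<j_2$, where equation $j_1$ forces $c_{j_1+1}=\cdots=c_{j_2}=0$ and equation $j_2$ becomes redundant): the answer, dimension $2$ exactly for $\muone,\mutwo\in\mathbb{N}$ and $a\in X\cap[-\muone-\mutwo-2,-|\muone-\mutwo|-2]$, is right and matches the theorem. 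Your approach is more elementary and self-contained (no tilting-module machinery, no splitting trick), at the cost of the hands-on linear algebra; the paper's approach generalizes more readily but leans on Fact \ref{Johanthm}. Two points you should make explicit to close the write-up: (a) in Step 1 you need $\bigcup_k F_k=M(\muone)\otimes M(\mutwo)$ before the character identity pins down the subquotients (this follows since the vectors $f^{k-j}(v_{\muone}\otimes f^j v_{\mutwo})$, $0\le j\le k$, span the weight space of weight $\muone+\mutwo-2k$ by invertibility of the binomial matrix $\bigl[\binom{k-j}{i}\bigr]$); and (b) in Step 3 the claim that the only modules with a Verma flag of type $[M(\lambda)]+[M(-\lambda-2)]$ are $M(\lambda)\oplus M(-\lambda-2)$ and $P(-\lambda-2)$ rests on $\mathrm{Ext}^1_{\mathcal{O}}(M(\lambda),M(-\lambda-2))=0$ (as $M(\lambda)=P(\lambda)$ is projective for $\lambda\in\mathbb{N}$) and $\dim\mathrm{Ext}^1_{\mathcal{O}}(M(-\lambda-2),M(\lambda))=1$; also note that $M(\lambda)\oplus M(-\lambda-2)$ is of course decomposable, so the phrase ``only indecomposable Verma-flag modules'' should be reworded. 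These are presentational gaps, not errors.
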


See \cite[Thm. 4.10]{KØSS} for analogous results on the restriction of parabolic Verma modules with respect to $\mathfrak{so}_{n+1}\supset \mathfrak{so}_{n}$. 

%\begin{cor}\label{COR}
%The tensor product $M(\muone)\otimes M(\mutwo)$ is multiplicity-free if and only if it is self-dual.
%\end{cor}

\begin{cor}\label{COR}
For the tensor product $M(\muone)\otimes M(\mutwo)$, the following four conditions on $(\mu',\mu'')\in \mathbb{C}^2$ are equivalent.\\
(i) $\dim \Hom_{\mathfrak{g}}(N,M(\mu')\otimes M(\mu''))\leq 1$ for any simple $\mathfrak{g}$-module $N\in \mathcal{O}$;\\
(ii) $\dim \Hom_{\mathfrak{g}}(M(\nu),M(\mu')\otimes M(\mu''))\leq 1$ for all $\nu \in \mathbb{C}$;\\
(iii) the tensor product $M(\mu')\otimes M(\mu'')$ is self-dual;\\
(iv) it does not contain any reducible Verma module as a direct summand.
\end{cor}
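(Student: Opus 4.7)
The plan is to translate all four conditions into explicit properties of the decomposition furnished by Theorem \ref{strtheorem}, namely $T := M(\mu') \otimes M(\mu'') \cong \bigoplus_{a \in A} P(a) \oplus \bigoplus_{b \in B} M(b)$. Reading off the definition of $A$, every $a \in A$ is an integer $\leq -2$; hence for each such $a$ the Verma $M(a)$ is irreducible but not projective, its projective cover $P(a)$ has Loewy series $[L(a), L(-a-2), L(a)]$ from top to bottom, and $P(a)$ is self-dual. A Verma $M(b)$, by contrast, is self-dual if and only if $b \notin \mathbb{N}$.

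The equivalence $(iii) \Leftrightarrow (iv)$ follows by Krull--Schmidt. If $B \cap \mathbb{N} = \emptyset$, every summand is self-dual, whence $T \cong T^\vee$. Conversely, if $M(b)$ with $b \in \mathbb{N}$ occurs, the BGG dual $M(b)^\vee$ has two composition factors and socle (rather than top) $L(b)$, so it is isomorphic neither to any $P(a)$ (three composition factors) nor to any Verma (wrong top), and the multisets of indecomposable summands of $T$ and $T^\vee$ must differ.

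For $(ii) \Leftrightarrow (iv)$ I would combine the explicit singular-vector formulas
\begin{align*}
\dim \Hom_{\mathfrak{g}}(M(\nu), P(a)) &= \begin{cases} 1 & \nu \in \{a, -a-2\}, \\ 0 & \text{otherwise}, \end{cases}\\
\dim \Hom_{\mathfrak{g}}(M(\nu), M(b)) &= \begin{cases} 1 & \nu = b,\ \text{or}\ b \in \mathbb{N}\ \text{and}\ \nu = -b-2, \\ 0 & \text{otherwise}, \end{cases}
\end{align*}
with the fact that $M(\nu)$ is cyclic, so $\Hom_{\mathfrak{g}}(M(\nu), T)$ is the direct sum of the corresponding contributions. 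Under $(iv)$, the pairwise disjointness $X = A \sqcup A^* \sqcup B$ (which comes from $A \subset \mathbb{Z}_{\leq -2}$ and $A^* \subset \mathbb{N}$) leaves at most one nonzero contribution per $\nu$, giving $(ii)$. If instead $b \in B \cap \mathbb{N}$, then $\mu' + \mu'' \in \mathbb{N}$, a direct computation yields $-b-2 \in X$, and since $-b-2 \leq -2$ while $A^* \subset \mathbb{N}$, one obtains $-b-2 \in X \setminus A^* = A \cup B$. Setting $\nu = -b-2$, both $M(b)$ and the second summand $P(-b-2)$ or $M(-b-2)$ contribute a singular vector of weight $\nu$, giving $\dim \Hom_{\mathfrak{g}}(M(-b-2), T) \geq 2$ and contradicting $(ii)$.

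The equivalence $(i) \Leftrightarrow (iv)$ is parallel, with singular-vector counts replaced by socle embeddings: $L(\nu) \hookrightarrow P(a)$ iff $\nu = a$, and $L(\nu) \hookrightarrow M(b)$ iff $L(\nu)$ is the socle of $M(b)$. The same bookkeeping yields $(iv) \Rightarrow (i)$, and for $b \in B \cap \mathbb{N}$ the socle $L(-b-2)$ of $M(b)$ is also the socle of the summand $P(-b-2)$ or $M(-b-2)$, producing $\dim \Hom_{\mathfrak{g}}(L(-b-2), T) \geq 2$. Finally, $(ii) \Rightarrow (i)$ is tautological from the surjection $M(\nu) \twoheadrightarrow L(\nu)$. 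The only non-routine point is the combinatorial lemma $-b-2 \in A \cup B$ for every $b \in B \cap \mathbb{N}$; this is the precise place where the asymmetry among the three cases defining $A$---and thus the appearance of reducible Verma summands exactly when $\mu', \mu'' \in \mathbb{N}$---enters the proof.
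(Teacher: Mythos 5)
Your proposal is correct and follows essentially the same route as the paper: every condition is reduced, via the decomposition of Theorem \ref{strtheorem}, to the single combinatorial criterion $B\cap\mathbb{N}=\emptyset$, using the self-duality of $P(a)$ and of simple Verma modules for (iii)$\Leftrightarrow$(iv) and the pair of summand indices $\{b,-b-2\}$ for the multiplicity-two phenomenon in (i) and (ii). If anything, your organization (proving (i), (ii), (iii) each equivalent to (iv) directly, with explicit $\Hom_{\mathfrak{g}}(M(\nu),-)$ and socle formulas) is slightly more careful than the paper's, which asserts that (i)$\Rightarrow$(ii) is obvious---an implication that is not formal for arbitrary modules and genuinely depends on the structure of the summands, exactly the structure your socle bookkeeping makes explicit.
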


We shall use Remark \ref{userem} in the proof of Theorem \ref{KPtheorem}.

\begin{rem}
The finite set $A$ is contained in $\{-2,-3,-4,\cdots\}$, see Example \ref{pjcoverex}. It is non-empty if and only if $(\muone,\mutwo)\in \mathbb{C}^2$ satisfies $\muone+\mutwo\in \mathbb{N}$ and $\muone-\mutwo\notin\{0,\pm 1\}$.
%\begin{align*}
%A\neq\emptyset \Longleftrightarrow\ &\muone +\mutwo \in \mathbb{N} \\
%&\cap (\muone,\mutwo \in \mathbb{N} \Rightarrow |\muone-\mutwo|>1).
%\end{align*}
\end{rem}

\begin{rem}\label{userem}
For $\nu\in \mathbb{N}$, the Verma module $M(\nu)$ occurs in $M(\muone)\otimes M(\mutwo)$ as a direct summand if and only if 
$\muone,\mutwo \in \mathbb{N}$ and 
$|\muone-\mutwo| \leq \nu\leq \muone+\mutwo$. See \cite[Thm. 9.1]{KP}. This condition is an analogue of the Clebsch-Gordan formula.
\end{rem}

\begin{exam}\label{counter}
If $(\mu',\mu'',\nu',\nu'')=(0,0,i,-i)$, then the tensor product $M(\mu')\otimes M(\mu'')$ is not isomorphic to $M(\nu')\otimes M(\nu'')$ because Theorem \ref{strtheorem} shows
\begin{align*}
M(0)\otimes M(0)&\cong \bigoplus_{k=0}^\infty M(-2k),\\
M(i)\otimes M(-i)&\cong P(-2)\oplus \bigoplus_{k=2}^\infty M(-2k),
\end{align*}
but $P(-2)$ is not isomorphic to the direct sum $M(0)\oplus M(-2)$.
In this case, the assumption in Theorem \ref{generaltheorem} is not satisfied because $0$ is not anti-dominant.
\end{exam}

\section{Proof of Theorem \ref{generaltheorem}.}

Our proof relies on the earlier work \cite{Kob} and \cite{Johan}. Kobayashi \cite{Kob} established a general framework of the restriction of parabolic Verma modules to reductive subalgebras, and proved the branching rule in the Grothendieck group for the discretely decomposable case, whereas {K\aa}hrstr\"{o}m  \cite{Johan} showed that a tensoring with Verma modules preserves Verma flags.

\subsection{Preliminaries of parabolic Verma modules.}\label{notation}
%基礎的な事柄は[Hum]を参考にする.
We collect some basic notions of parabolic Verma modules, see \cite{Hum}.
Let $\mathfrak{g}$ be a semisimple Lie algebra over $\mathbb{C}$, $\mathfrak{h}$ a Cartan subalgebra, and W the Weyl group of the root system $\Delta:=\Delta(\mathfrak{g},\mathfrak{h})$. We fix a positive system $\Delta^+$, take  the corresponding Borel subalgebra $\mathfrak{b}:=\mathfrak{h}+\mathfrak{n}$ with nilpotent radical $\mathfrak{n}=\oplus_{\alpha \in \Delta^{+}} \mathfrak{g}_{\alpha}$, and let $\rho$ be half the sum of positive roots. A standard parabolic subalgebra $\mathfrak{p}$ is a parabolic subalgebra containing the Borel subalgebra  
$\mathfrak{b}$. We take a Levi decomposition $\mathfrak{p}=\mathfrak{l}+\mathfrak{u}$ with $\mathfrak{h}\subset \mathfrak{l}$, and set $\Delta^{+}(\mathfrak{l}):=\Delta^{+}\cap \Delta(\mathfrak{l},\mathfrak{h})$ and $\mathfrak{n}_{-}(\mathfrak{l}):=\oplus_{\alpha \in \Delta^+ (\mathfrak{l})} \mathfrak{g}_{-\alpha}$. Let $U(\mathfrak{g})$ be the universal enveloping algebra of $\mathfrak{g}$, and $Z(\mathfrak{g})$ its center.\\

A $\mathfrak{g}$-module $M$ is called \textit{locally $Z(\mathfrak{g})$-finite} if for every $x\in M$ the space $Z(\mathfrak{g})x$ is finite dimensional. Let $\mathcal{Z}_{fin}$ be the category of locally $Z(\mathfrak{g})$-finite $\mathfrak{g}$-modules. Category $\mathcal{O}$ is the full subcategory of $\mathcal{Z}_{fin}$ such that the objects are finitely generated and $\mathfrak{h}$-semisimple, and locally $\mathfrak{n}$-finite. Category $\mathcal{O}^{\mathfrak{p}}$ is the full subcategory of $\mathcal{O}$ such that the objects are locally $\mathfrak{n}_{-}(\mathfrak{l})$-finite.\\

We write the set of dominant integral for $\Lambda^{+}(\mathfrak{l}):=\{\lambda\in \mathfrak{h}^{*}: \langle\lambda,\alpha^{\vee}\rangle\in \mathbb{N}\quad \text{for all}\ \alpha \in \Delta^{+}(\mathfrak{l}) \}$ where $\alpha^{\vee}$ denotes the coroot of $\alpha$. For $\lambda \in \Lambda^{+} (\mathfrak{l})$, let $F_\lambda$ denote the finite-dimensional irreducible $\mathfrak{l}$-module with highest weight $\lambda$. We also write $\mathbb{C}_\lambda$ for $F_\lambda$ if it is one-dimensional. We regard $F_\lambda$ (and $\mathbb{C}_\lambda$) as a $\mathfrak{p}$-module by letting $\mathfrak{u}$ act trivially. The $\mathfrak{g}$-module $M^{\mathfrak{g}}_{\mathfrak{p}}(\lambda):=U(\mathfrak{g})\otimes_{U(\mathfrak{p})} F_\lambda $ is referred to as a parabolic Verma module. For $\mathfrak{p}=\mathfrak{b}$, it reduces to the usual Verma module $M(\lambda)\equiv M^{\mathfrak{g}}_{\mathfrak{b}}(\lambda)=U(\mathfrak{g})\otimes_{U(\mathfrak{b})} \mathbb{C}_{\lambda}$ for $\lambda\in \mathfrak{h}^*$. An element $\lambda \in \mathfrak{h}^{*}$ is said to be \textit{anti-dominant} if $\langle \lambda+\rho , \alpha^{\vee} \rangle \notin \mathbb{N}_{+}=\{1,2,\cdots\}$ for all $\alpha \in \Delta^+$, equivalently, it $M(\lambda)$ is simple. \\

The Harish-Chandra isomorphism parametrizes the set $\Hom_{\mathbb{C}\text{-alg}}(Z(\mathfrak{g}),\mathbb{C})$ of $\mathbb{C}$-algebraic homomorphisms as
\begin{align*}
\mathfrak{h}^{*}/W \cong \Hom_{\mathbb{C}\text{-alg}}(Z(\mathfrak{g}),\mathbb{C}),\ \nu \mapsto \chi_{\nu}.
\end{align*}

In our parametrization, the infinitesimal character of the trivial module is $\rho \in \mathfrak{h}^{*}/W$ and that of the Verma module $M(\lambda)$ is $\lambda+\rho$. %$\pr_{\nu}$ denotes the projection to the submodule with generalized infinitesimal character $\nu \in \mathfrak{h}^* /W$. It is convenient to write $M^{\chi_{\nu}}:=\pr_{\nu}(M)$ when $M$ can be decomposed by generalized infinitesimal characters.\\
Any locally $Z(\mathfrak{g})$-finite $\mathfrak{g}$-module $M$ admits a primary decomposition
\begin{align*}
    M=\bigoplus_{\nu\in \mathfrak{h}*/W} M^{\chi_{\nu}},
\end{align*}
where $M^{\chi_\nu}$ is the $\nu$-primary component of $\nu$, namely, $M^{\chi_\nu}$ is the $\mathfrak{g}$-submodule of elements annihilated by a power of $\Ker (\chi_\nu : Z(\mathfrak{g})\rightarrow \mathbb{C})$.

\begin{dfn}[{\cite[Ch.\ 3.9]{Hum}}]\label{projectivecover}
For $M\in \mathcal{O}$, a surjective $\mathfrak{g}$-map $\phi:P\to M$ is called a \textit{projective covering} of $M$ if $P\in \mathcal{O}$ is a projective module without any proper submodule such that its image of $\phi$ equals $M$.
\end{dfn}

We omit $\phi$ when it is obvious. For $\lambda\in \mathfrak{h}^*$, let $P(\lambda)$ denote the projective cover of the Verma module $M(\lambda)$.

\begin{exam}\label{pjcoverex}
Let $\mathfrak{g}=\sltwo(\mathbb{C})$. For $\lambda\notin\{-2,-3,\cdots\}$, one has $P(\lambda ) =M(\lambda)$. If $\lambda \in \{-2,-3,\cdots \}$, there is a non-split short exact sequence of $\mathfrak{g}$-modules:
\begin{align*}
0 \rightarrow M(-\lambda-2) \rightarrow P(\lambda) \rightarrow M(\lambda)\rightarrow 0 .
\end{align*}
\end{exam}

\subsection{Branching problem for parabolic Verma modules.}

%まず,[Ko '10]の中で導入された離散分解という概念を導入する.
%Verma 加群のテンソル積$M(\lambda)\otimes M(\mu)$の$\mathfrak{g}$加群構造は,$\mathfrak{g} \oplus \mathfrak{g}$について最高ウェイト$(\lambda,\mu)$のVerma 加群を考え,$diag(\mathfrak{g})$に制限したものと捉えることができる.この視点から見ると,Verma 加群の分岐則を考えることによって問題にアプローチすることもできる.
%分岐則を考える上で有用なのが離散分解という概念である;
The tensor product $M(\mu')\otimes M(\mu'')$ may be viewed as the restriction of the Verma module $M(\mu',\mu'')=U(\mathfrak{g}\oplus \mathfrak{g})\otimes_{U(\mathfrak{b}\oplus \mathfrak{b})}\mathbb{C}_{(\mu',\mu'')}$ to the subalgebra $\diag(\mathfrak{g})$ of $\mathfrak{g}\oplus \mathfrak{g}$. We then utilize the general framework in \cite{K98} and \cite{Kob}. In particular, the concept of discrete decomposability of $\mathfrak{g}$-modules, which was introduced in \cite{K98} as an algebraic counterpart of the unitary representation without continuous spectrum, is useful in our study of the tensor product.

\begin{dfn}[{\cite[Def. 3.1]{Kob}}]
%$\mathfrak{g}$加群$X$が離散分解するとは,部分加群の上昇列$\{ X_{i} \}_{i\in \mathbb{N}}$があり,
%$(i)\ \bigcup X_{i} = X,\ (ii)$各$X_{i}$は有限長.\ となっていること
A $\mathfrak{g}$-module $X$ is called \textit{discretely decomposable} if there is an ascending sequence
$\{ X_{i} \}_{i\in \mathbb{N}}$ of  submodules such that $\bigcup X_{i} = X$ and that each $X_{i}$ has finite length.\\
Moreover for a full subcategory $\mathcal{A}$ of the category of $\mathfrak{g}$-modules, we say $X\in \mathcal{A}$ is \textit{discretely decomposable} in $\mathcal{A}$ if every $X_i$ belongs to $\mathcal{A}$.
\end{dfn}

Consider the restriction of a generalized Verma module $M^{\mathfrak{g}}_{\mathfrak{p}}(\mu)$ with respect to a reductive subalgebra $\mathfrak{g}'$ of $\mathfrak{g}$. In general, the restriction may not contain any simple $\mathfrak{g}'$-module. A geometric criterion on the pair $(\mathfrak{g}',\mathfrak{p})$ was proved in \cite{Kob} about when the restriction $M^{\mathfrak{g}}_{\mathfrak{p}}(\mu)\rvert_{\mathfrak{g}'}$ is descretely deconposable. In what follows, we recall an \textit{algebraic condition} which is slightly stronger than this \textit{geometric condition}. \\
A semisimple element $H\in \mathfrak{h}$ is said to be hyperbolic if $\ad(H)$ is diagonalizable with all eigenvalues being real. 
We write $\mathfrak{g}=\mathfrak{u}_{-}+ \mathfrak{l} +\mathfrak{u}$ for the decomposition into the sum of eigenspaces of $\ad(H)$ with negative, $0$ and positive eigenvalues, respectively.
Then $\mathfrak{p}(H):=\mathfrak{l}+ \mathfrak{u}$ is a parabolic subalgebra of $\mathfrak{g}$. 

\begin{dfn}[{\cite[Def. 3.7]{Kob}}]\label{Kobdef} A parabolic subalgebra $\mathfrak{p}$ is said to be $\mathfrak{g}'$-$compatible$ if there exists a hyperbolic element $H$ in $\mathfrak{g}'$ such that $\mathfrak{p}=\mathfrak{p}(H)$.
\end{dfn}

If $\mathfrak{p}=\mathfrak{l}+\mathfrak{u}$ is $\mathfrak{g}'$-compatible, then $\mathfrak{p}':=\mathfrak{p}\cap \mathfrak{g}'$ becomes a parabolic subalgebra of $\mathfrak{g}'$ with the Levi decomposition:

\begin{equation}\label{eqn:smallpara}
\mathfrak{p}'=\mathfrak{l}'+\mathfrak{u}'
:=(\mathfrak{l}\cap\mathfrak{g}')+(\mathfrak{u}\cap \mathfrak{g}').
\end{equation}

%The usefulness of $descretely decomposable$ is 
We use the following fact that guarantees the discrete decomposability of the restriction.

\begin{fact}[{\cite[Prop. 3.8]{Kob}}]\label{disdec}
If a parabolic subalgebra $\mathfrak{p}$ of $\mathfrak{g}$ is $\mathfrak{g}'$-$compatible$, then the restriction $X\rvert_{\mathfrak{g}'}$ is discretely decomposable in $\mathcal{O}^{\mathfrak{p}'}$ for any $X\in \mathcal{O}^\mathfrak{p}$
\end{fact}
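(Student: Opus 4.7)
The plan is to exploit the hyperbolic element $H\in\mathfrak{g}'$ with $\mathfrak{p}=\mathfrak{p}(H)$ to build an exhausting ascending sequence of $\mathfrak{g}'$-submodules of $X|_{\mathfrak{g}'}$ each of finite length in $\mathcal{O}^{\mathfrak{p}'}$. Since $\mathfrak{l}$ is the zero eigenspace of $\ad(H)$, the element $H$ lies in the center of $\mathfrak{l}$, hence in $\mathfrak{h}$; because $X$ is $\mathfrak{h}$-semisimple, $H$ acts semisimply on $X$ with real eigenvalues, giving an $H$-weight decomposition $X=\bigoplus_{c\in\mathbb{R}}X_c$. Since $X\in\mathcal{O}^{\mathfrak{p}}$ is finitely generated and locally $\mathfrak{n}_{-}(\mathfrak{l})$-finite, one can choose a finite-dimensional $\mathfrak{p}$-stable generating subspace $F\subset X$; then PBW yields $X=U(\mathfrak{u}_{-})F$. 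As $\ad(H)$ is strictly negative on $\mathfrak{u}_{-}$, the $H$-spectrum on $X$ is bounded above by $c_{0}:=\max\{c:F\cap X_c\neq 0\}$, and each $X_c$ lies in $U^{\leq N(c)}(\mathfrak{u}_{-})F$ for some $N(c)<\infty$, and is in particular finite-dimensional.

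For each $n\in\mathbb{N}$, set
\[
F_n:=\bigoplus_{c\geq c_{0}-n}X_c,
\]
which is finite-dimensional and stable under $\mathfrak{l}$ (which commutes with $H$) and under $\mathfrak{u}$ (which strictly raises $H$-weights, none of which exceed $c_{0}$ in $X$). A fortiori, $F_n$ is stable under the parabolic $\mathfrak{p}'=\mathfrak{l}'+\mathfrak{u}'$ of $\mathfrak{g}'$ given by \eqref{eqn:smallpara}, so
\[
X_n:=U(\mathfrak{g}')F_n=U(\mathfrak{u}'_{-})F_n
\]
is a finitely generated $\mathfrak{g}'$-submodule of $X|_{\mathfrak{g}'}$. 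The chain $X_{0}\subseteq X_{1}\subseteq\cdots$ is ascending and satisfies $\bigcup_n X_n\supseteq\bigcup_n F_n=X$, providing the required exhaustion.

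It remains to verify that each $X_n$ belongs to $\mathcal{O}^{\mathfrak{p}'}$, after which finite length follows from the standard fact that every object of the BGG category $\mathcal{O}(\mathfrak{g}')$ has finite length (\cite[Thm.\ 1.11]{Hum}). Finite generation over $U(\mathfrak{g}')$ has been established, $\mathfrak{h}$-semisimplicity and local $(\mathfrak{n}\cap\mathfrak{g}')$-finiteness are inherited from $X\in\mathcal{O}$, and local $\mathfrak{n}_{-}(\mathfrak{l}')$-finiteness follows from the inclusion $\mathfrak{n}_{-}(\mathfrak{l}')\subseteq\mathfrak{n}_{-}(\mathfrak{l})$ together with the hypothesis that $X$ is locally $\mathfrak{n}_{-}(\mathfrak{l})$-finite.

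The main obstacle I anticipate is securing the finite-dimensionality of each $H$-eigenspace $X_c$: a priori the $\mathfrak{l}$-isotypic content of $X_c$ could be unbounded, but the PBW bound $X_c\subset U^{\leq N(c)}(\mathfrak{u}_{-})F$ combined with $\dim F<\infty$ forces $\dim X_c<\infty$. The $\mathfrak{g}'$-compatibility hypothesis is used essentially to guarantee $\mathfrak{l}'\subseteq\mathfrak{l}$ and $\mathfrak{u}'\subseteq\mathfrak{u}$, so that the $H$-weight truncations $F_n$ are simultaneously stable under the smaller Levi and nilradical; without it one loses the coherence between the $H$-grading and the parabolic $\mathfrak{p}'$, and the filtration argument breaks down.
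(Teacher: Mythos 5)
The paper offers no proof of this statement: it is imported as a Fact directly from \cite[Prop.\ 3.8]{Kob}, so there is no in-text argument to compare against. Your proof is a sound reconstruction of the standard argument (and is essentially the strategy of Kobayashi's original proof): grade $X$ by the hyperbolic element $H$, use $X=U(\mathfrak{u}_{-})F$ for a finite-dimensional $\mathfrak{p}$-stable generating subspace $F$ to bound the $H$-spectrum from above and force finite-dimensional graded pieces, truncate to obtain finite-dimensional $\mathfrak{p}'$-stable subspaces $F_n$, and set $X_n:=U(\mathfrak{g}')F_n$. Two small points should be tightened. First, the $H$-eigenvalues on $X$ need not be real: for $X\in\mathcal{O}^{\mathfrak{p}}$ the weights lie in finitely many cones $\lambda_i-\Gamma$, so the eigenvalues lie in finitely many translates $\lambda_i(H)-\mathbb{R}_{\geq 0}$ with $\lambda_i(H)\in\mathbb{C}$ arbitrary; the truncation defining $F_n$ should therefore be taken with respect to the real part of the eigenvalue (or performed separately on each translate), after which your stability and finiteness arguments go through unchanged. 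Second, the inclusion $\mathfrak{n}_{-}(\mathfrak{l}')\subseteq\mathfrak{n}_{-}(\mathfrak{l})$ is not automatic, since a Borel subalgebra of $\mathfrak{l}'=\mathfrak{l}\cap\mathfrak{g}'$ need not sit inside the chosen Borel of $\mathfrak{l}$; the clean way to get local $\mathfrak{n}_{-}(\mathfrak{l}')$-finiteness of $X_n$ is to observe that $X\in\mathcal{O}^{\mathfrak{p}}$ is a locally finite $\mathfrak{l}$-module, hence a locally finite $\mathfrak{l}'$-module. With these repairs the argument is complete, and finite length of each $X_n$ does indeed follow from the finite-length property of objects of the BGG category for $\mathfrak{g}'$.
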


For a $\mathfrak{g}'$-compatible parabolic subalgebra $\mathfrak{p}$, each eigenspace of $H$ on $M^{\mathfrak{g}}_{\mathfrak{p}}(\mu) \cong U(\mathfrak{u}_{-})\otimes F_\mu$ is finite-dimensional, hence one has:

\begin{rem}\label{fdrem}
The weight spaces of $M^{\mathfrak{g}}_{\mathfrak{p}}(\mu) \rvert_{\mathfrak{g}'}$ are all finite-dimensional.
\end{rem}

Given a vector space V, we denote by $S(V):=\oplus_{k=0}^{\infty} S^{k} (V)$ the symmetric tensor algebra. Retain the notation as in $\eqref{eqn:smallpara}$. We extend the adjoint action of $\mathfrak{l}'$ on $\mathfrak{u}_{-}/\mathfrak{u}_{-}\cap \mathfrak{g}'$ to 
$S(\mathfrak{u}_{-}/\mathfrak{u}_{-}\cap \mathfrak{g}^{'})$.
As in \cite[Sect. 3.3]{Kob}, we take a Cartan subalgebra $\mathfrak{h}'$ of $\mathfrak{g}'$ such that $H\in \mathfrak{h}'$, and extend it to a Cartan subalgebra $\mathfrak{h}$ of $\mathfrak{g}$. Clearly $\mathfrak{h}\subset \mathfrak{l}$ and $\mathfrak{h}'\subset \mathfrak{l}'$.
For $\delta \in \Lambda^{+}(\mathfrak{l}')$, we denote by $F_{\delta}'$ the simple $\mathfrak{l}'$-module of highest weight $\delta$. We set

\begin{align*}
m(\delta;\mu):=\dim \Hom_{\mathfrak{l}'}(F'_{\delta},F_{\mu}\otimes S(\mathfrak{u}_{-}/\mathfrak{u}_{-}\cap \mathfrak{g}')).
\end{align*}

\begin{fact}[{\cite[Thm. 3.10]{Kob}}]\label{Kobthm}
Suppose that $\mathfrak{p}=\mathfrak{l}+\mathfrak{u}$ is $\mathfrak{g}'$-compatible. Then for any $\mu \in \Lambda^{+} (\mathfrak{l})$,\\
(1) $m(\delta;\mu)<\infty\ for\ all\ \delta\in \Lambda^{+} (\mathfrak{l'})$.\\
(2) In\ the\ Grothendieck\ group, $[M^{\mathfrak{g}}_{\mathfrak{p}}(\mu) \rvert_{\mathfrak{g}'}]=\bigoplus_{\delta \in \Lambda^{+} (\mathfrak{l'})} m(\delta;\mu)\ [M^{\mathfrak{g}'}_{\mathfrak{p}'}(\delta)]$.
\end{fact}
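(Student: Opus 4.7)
My plan is to reduce both assertions to a multiplicity count in the $\mathfrak{l}'$-module $V:=F_\mu\otimes S(W)$, where $W:=\mathfrak{u}_-/\mathfrak{u}_-\cap\mathfrak{g}'$, by exploiting the $H$-grading furnished by the $\mathfrak{g}'$-compatibility hypothesis. The starting point is PBW: $M^{\mathfrak{g}}_{\mathfrak{p}}(\mu)\cong U(\mathfrak{u}_-)\otimes_{\mathbb{C}} F_\mu$ as $\mathfrak{l}$-modules. Since $H\in\mathfrak{g}'$ acts semisimply on $\mathfrak{u}_-$ and $\mathfrak{l}'$ centralizes $H$, the short exact sequence $0\to\mathfrak{u}_-\cap\mathfrak{g}'\to\mathfrak{u}_-\to W\to 0$ of $\mathfrak{l}'$-modules splits. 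Choosing an ordered PBW basis that lists a basis of $\mathfrak{u}_-\cap\mathfrak{g}'$ before an $\mathfrak{l}'$-stable lift of $W$ produces an $\mathfrak{l}'$-linear isomorphism $U(\mathfrak{u}_-)\cong U(\mathfrak{u}_-\cap\mathfrak{g}')\otimes S(W)$, whence
$$M^{\mathfrak{g}}_{\mathfrak{p}}(\mu)\cong U(\mathfrak{u}_-\cap\mathfrak{g}')\otimes_{\mathbb{C}} V\quad\text{as }\mathfrak{l}'\text{-modules.}$$

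For part (1), since $\mathfrak{l}'$ centralizes $H$, every weight of the irreducible $\mathfrak{l}'$-module $F'_\delta$ shares the single $H$-eigenvalue $\delta(H)$. The hyperbolic $H$ acts with strictly negative eigenvalues on $W$, so $S(W)$ is $H$-graded with finite-dimensional graded pieces, and tensoring with the finite-dimensional $F_\mu$ preserves this property. Consequently the $H$-eigenspace $V_{\delta(H)}$ is finite-dimensional, and the $F'_\delta$-isotypic part of $V$ is contained in $V_{\delta(H)}$, forcing $m(\delta;\mu)=\dim\Hom_{\mathfrak{l}'}(F'_\delta,V)<\infty$.

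For part (2), I would build an exhausting, $\mathfrak{g}'$-stable filtration $\{N_k\}$ of $M^{\mathfrak{g}}_{\mathfrak{p}}(\mu)\rvert_{\mathfrak{g}'}$ whose successive quotients lie in $\mathcal{O}^{\mathfrak{p}'}$ and, in the Grothendieck group, decompose as direct sums of parabolic Verma modules $M^{\mathfrak{g}'}_{\mathfrak{p}'}(\delta)$. Concretely, filter $V$ by finite-dimensional $\mathfrak{l}'$-submodules $V^{(k)}$ indexed by lower cutoffs in $H$-eigenvalue, and set $N_k:=U(\mathfrak{u}_-\cap\mathfrak{g}')\cdot V^{(k)}\subset M^{\mathfrak{g}}_{\mathfrak{p}}(\mu)$. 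The factorization of the first paragraph makes $N_k/N_{k-1}$ a free $U(\mathfrak{u}_-\cap\mathfrak{g}')$-module on $V^{(k)}/V^{(k-1)}$; for each $\mathfrak{l}'$-highest weight vector of type $F'_\delta$ in this quotient, one extracts a $\mathfrak{g}'$-homomorphism from $M^{\mathfrak{g}'}_{\mathfrak{p}'}(\delta)$ which is injective by freeness and surjective onto its $U(\mathfrak{u}_-\cap\mathfrak{g}')$-direct summand by comparison of formal characters. Fact \ref{disdec} guarantees that the filtration genuinely lives in $\mathcal{O}^{\mathfrak{p}'}$, and summing Grothendieck classes across $k$ yields the identity in (2).

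The main obstacle is the $\mathfrak{g}'$-stability of $\{N_k\}$, together with the complementary claim that each $\mathfrak{l}'$-highest weight vector is $\mathfrak{u}'$-annihilated modulo $N_{k-1}$. When $x\in\mathfrak{u}'$ acts on $V^{(k)}$ viewed inside $M^{\mathfrak{g}}_{\mathfrak{p}}(\mu)$, the PBW rewriting $xu=ux+[x,u]$ produces cross-terms whose brackets can wander through $\mathfrak{l}$ and $\mathfrak{u}$ before being absorbed by the $\mathfrak{u}$-killing action on $F_\mu$. Crucially, each such bracket strictly raises the $H$-eigenvalue, because $H$ is hyperbolic and $x$ has positive $H$-weight, so the cross-terms drop into a lower filtration level $N_{k-1}$. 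Making this bookkeeping precise, and pinning down exactly why the compatibility hypothesis, rather than merely the reductivity of $\mathfrak{g}'$, is the correct input, is the technical crux of the argument.
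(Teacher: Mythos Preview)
The paper does not give its own proof of this statement: it is recorded as a \emph{Fact} quoted verbatim from \cite[Thm.~3.10]{Kob} and used as a black box throughout. So there is nothing in the present paper to compare your argument against.

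That said, your outline is essentially the argument of the original reference. The PBW identification $M^{\mathfrak{g}}_{\mathfrak{p}}(\mu)\cong U(\mathfrak{u}_-)\otimes F_\mu$, the $H$-grading to force finite multiplicities, and the $H$-filtration (equivalently, degree in $S(W)$) to produce a $\mathfrak{p}'$-stable exhaustion whose induced $\mathfrak{g}'$-subquotients are parabolic Verma modules---this is exactly the strategy. One small point of care: the $\mathfrak{l}'$-isomorphism $U(\mathfrak{u}_-)\cong U(\mathfrak{u}_-\cap\mathfrak{g}')\otimes S(W)$ you assert in the first paragraph is not obtained just by ``ordering a PBW basis with $\mathfrak{u}_-\cap\mathfrak{g}'$ first''; reordering monomials in a lift $\tilde W$ of $W$ produces commutators $[\tilde W,\tilde W]\subset\mathfrak{u}_-$ that need not land back in $\tilde W$, so the span of ordered $\tilde W$-monomials is not manifestly $\mathfrak{l}'$-stable. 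The clean fix is to use the $\mathfrak{l}$-equivariant symmetrization $S(\mathfrak{u}_-)\xrightarrow{\sim}U(\mathfrak{u}_-)$ and take the image of $S(\tilde W)$, or---since you only need a Grothendieck-group identity---to work throughout with the degree filtration and its associated graded, where the isomorphism is immediate. You already flag this issue in your last paragraph, so the strategy is sound; just make the filtered (rather than direct) nature of the identification explicit.
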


\begin{exam}
We write $\Gamma$ for the semigroup generated by positive roots, and $M(\mu)_\nu$ for the $\nu$-weight space of $M(\mu)$. Applying Fact \ref{Kobthm} to $(\mathfrak{g}\oplus \mathfrak{g},\diag(\mathfrak{g}))$, we get the following equation in the Grothendieck group:

\begin{align*}
[M(\mu')\otimes M(\mu'')]=\bigoplus_{\nu \in \mu'+\mu''-\Gamma} \dim M(\mu'+\mu'')_{\nu} \ [M(\nu)].
\end{align*}    
\end{exam}

We summarize some direct consequences of Fact \ref{disdec}, Remark \ref{fdrem}, and Fact \ref{Kobthm}.

\begin{cor}
Retain the setting of Fact \ref{Kobthm}. For any 
$M\in \mathcal{O}^{\mathfrak{p}}$, in particular for $M=M^{\mathfrak{g}}_{\mathfrak{p}}(\mu) \rvert_{\mathfrak{g}'}$, the restriction $M \rvert_{\mathfrak{g}'}$\ admits a $Z(\mathfrak{g}')$-primary decomposition and each component belongs to
$\mathcal{O}^{\mathfrak{p}'}$.
\end{cor}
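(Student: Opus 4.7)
The plan is to decouple the statement into two parts: first, verifying that $M|_{\mathfrak{g}'}$ is locally $Z(\mathfrak{g}')$-finite so that a primary decomposition exists at all; and second, verifying that each primary component $M^{\chi_\nu}$ satisfies the defining conditions of $\mathcal{O}^{\mathfrak{p}'}$, of which finite generation is the only nontrivial one.

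For the first part, I would invoke Fact~\ref{disdec}: since $\mathfrak{p}$ is $\mathfrak{g}'$-compatible, the restriction $M|_{\mathfrak{g}'}$ admits an ascending filtration $M = \bigcup_{i \in \mathbb{N}} X_i$ by submodules $X_i \in \mathcal{O}^{\mathfrak{p}'}$ of finite length. Any $x \in M$ lies in some $X_i$, on which $Z(\mathfrak{g}')$ acts through a finite-dimensional quotient (only finitely many central characters occur among the composition factors of $X_i$, and each is annihilated by a power of the corresponding maximal ideal). Hence $Z(\mathfrak{g}')x$ is finite-dimensional, $M|_{\mathfrak{g}'}$ is locally $Z(\mathfrak{g}')$-finite, and the primary decomposition $M|_{\mathfrak{g}'} = \bigoplus_\nu M^{\chi_\nu}$ exists.

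For the second part, $\mathfrak{h}'$-semisimplicity and the local finiteness conditions for $\mathfrak{n}'$ and $\mathfrak{n}_-(\mathfrak{l}')$ are clearly inherited by any $\mathfrak{g}'$-submodule of $M|_{\mathfrak{g}'}$, so the task reduces to showing $M^{\chi_\nu}$ is finitely generated. The strategy is to intersect the filtration with the primary component, write $M^{\chi_\nu} = \bigcup_i X_i^{\chi_\nu}$, and argue this increasing union stabilizes. By the Harish-Chandra linkage principle, only finitely many $\delta \in \Lambda^+(\mathfrak{l}')$ satisfy $\delta + \rho' \in W' \cdot \nu$, so only finitely many simple objects $L(\delta)$ of $\mathcal{O}^{\mathfrak{p}'}$ carry the central character $\chi_\nu$. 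For each such $\delta$, the Jordan--H\"older multiplicity $[X_i^{\chi_\nu} : L(\delta)]$ is nondecreasing in $i$ and bounded above by $\dim M_\delta$, the $\delta$-weight space of $M$ viewed through $\mathfrak{h}' \subset \mathfrak{l}'$. This last quantity is finite by Remark~\ref{fdrem} for parabolic Verma modules, and in the general case by expressing $M$ as a quotient of a finite direct sum $\bigoplus_j M^{\mathfrak{g}}_{\mathfrak{p}}(\mu_j)$, which is available because $M \in \mathcal{O}^{\mathfrak{p}}$ is finitely generated by $\mathfrak{l}$-finite weight vectors. Consequently, each multiplicity stabilizes; since there are only finitely many relevant $\delta$, the Grothendieck class of $X_i^{\chi_\nu}$ is eventually constant. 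Combined with the inclusion $X_i^{\chi_\nu} \subset X_{i+1}^{\chi_\nu}$, this forces $X_{i+1}^{\chi_\nu} = X_i^{\chi_\nu}$ for large $i$, whence $M^{\chi_\nu} = X_N^{\chi_\nu} \in \mathcal{O}^{\mathfrak{p}'}$.

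The main obstacle I anticipate is the stabilization step, which simultaneously requires finite-dimensionality of $\mathfrak{h}'$-weight spaces across all of $\mathcal{O}^{\mathfrak{p}}$ (via the reduction to parabolic Vermas) and the linkage bound on the number of $L(\delta)$'s sharing a central character. Once both ingredients are in place, converting the Grothendieck-group stabilization into honest equality of submodules is routine bookkeeping.
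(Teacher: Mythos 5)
Your overall strategy is exactly the one the paper intends: the paper gives no written proof, merely asserting the corollary is a direct consequence of Fact \ref{disdec}, Remark \ref{fdrem}, and Fact \ref{Kobthm}, and your argument supplies precisely those missing details (the finite-length filtration from Fact \ref{disdec} yields local $Z(\mathfrak{g}')$-finiteness and hence the primary decomposition; linkage together with finite-dimensional $H$-eigenspaces forces $X_i^{\chi_\nu}$ to stabilize). The skeleton is sound, including the step converting stabilization of Grothendieck classes into equality of submodules.

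One intermediate claim is false as stated: that every $M\in\mathcal{O}^{\mathfrak{p}}$ is a quotient of a finite direct sum of parabolic Verma modules. Being finitely generated by $\mathfrak{l}$-finite weight vectors does not make those generators $\mathfrak{u}$-invariant, and the claim already fails for $\mathfrak{g}=\sltwo$, $\mathfrak{p}=\mathfrak{b}$: the dual Verma module $M(0)^\vee$ has simple socle $L(0)$, and since $\Hom_{\mathfrak{g}}(M(\lambda),M(0)^\vee)$ vanishes unless $\lambda=0$, in which case the image of any nonzero map is $L(0)$, no direct sum of Verma modules surjects onto $M(0)^\vee$. What you actually need, namely $\dim M_\delta<\infty$ for every $\mathfrak{h}'$-weight $\delta$, is nevertheless true and easily recovered: every $M\in\mathcal{O}^{\mathfrak{p}}$ admits a finite filtration whose subquotients are highest weight modules in $\mathcal{O}^{\mathfrak{p}}$, each of which is a quotient of some $M^{\mathfrak{g}}_{\mathfrak{p}}(\mu_j)$, so $\dim M_\delta\leq\sum_j\dim \left(M^{\mathfrak{g}}_{\mathfrak{p}}(\mu_j)\right)_\delta<\infty$ by Remark \ref{fdrem}. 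With that substitution your proof goes through.
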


We then reduce a question to find the branching rules to some discussion in BGG category $\mathcal{O}$.

\subsection{Tensor products and Verma flags.}
%さて,$\mathcal{O}$の加群どうしのテンソルについて,[Johan]でよく調べられている.
%彼の研究では,少し拡大された圏$\Oinf$を導入して$M,N\in \mathcal{O}$に対し$M\otimes N\in \Oinf$であることを軸に研究している.
We recall from \cite{Hum} and \cite{Johan} the notion of Verma flags and some compatibility of the tensor product of two modules in $\mathcal{O}$ and Verma flags.

\begin{dfn}[{\cite{Johan}}]
Let $\Oinf$ be the full subcategory of $\mathcal{Z}_{fin}$ of which objects $M$ have the property that there are finitely many weights $\lambda_{1}, \cdots \lambda_{n}$ such that 
$\Pi (M)\subset \bigcup (\lambda_{i}-\Gamma)$, where we denote by $\Pi(M)$ the set of weights of $M$.

\end{dfn}

%ここで$\Pi(M)$は$M$のウェイトの集合である.

\begin{rem}
%今,$(\mathfrak{g}\oplus \mathfrak{g},diag(\mathfrak{g}))$に対し,$\mathfrak{b}\oplus \mathfrak{b}$は$diag(\mathfrak{g})-compatible$である.
%$M,N\in \mathcal{O}$に対し$M\otimes N$は自然に$\mathfrak{g}\oplus \mathfrak{g}$加群と思うことができて,その作用を$diag (\mathfrak{g})$に制限したものは,元の$\mathfrak{g}$加群の意味でのテンソル積と一致する.
%そのため,$M\otimes N$は中心指標分解できて,各component は$\mathcal{O}$に入る.
For the symmetric pair $(\mathfrak{g}\oplus \mathfrak{g},\diag(\mathfrak{g}))$, the Borel subalgebra $(\mathfrak{b}\oplus \mathfrak{b})$ is $\diag(\mathfrak{g})$-compatible, see Definition \ref{Kobdef}. Thus it follows from Fact \ref{disdec} that for any $M,N\in \mathcal{O}$, the tensor product $M\otimes N \in \Oinf$ and admits a primary decomposition. 
\end{rem}

%$\mathcal{O}$の双対について復習しておこう.
%複素半単純リー代数$\mathcal{g}$上の反対合$\tau$で,$\mathfrak{h}$上identity で$\mathfrak{g}_{\alpha}$を$\mathfrak{g}_{-\alpha}$と入れ替えるものを固定する.
Let us review dual modules in $\mathcal{O}$ and the Verma flags, see \cite[Ch. 3]{Hum}.\\
Fix an anti-involution $\tau$ of a complex semisimple Lie algebra $\mathfrak{g}$ such that $\tau$ switches $\mathfrak{g}_{\alpha}$ with $\mathfrak{g}_{-\alpha}$ and is the identity on $\mathfrak{h}$. For $M\in \mathcal{Z}_{fin}$, we define a $\mathfrak{g}$-module on
\begin{align*}
M^{\vee}:=\bigoplus_{\lambda\in \mathfrak{h}^*} \Hom_{\mathbb{C}} (M_{\lambda},\mathbb{C})
\end{align*}
by $(X.f)(v):=f(\tau(X).v)$ for $f\in M^{\vee}$, $v\in M$, and $X\in \mathfrak{g}$. For $M,N\in \mathcal{O}$, one has $(M\otimes N)^{\vee} \cong M^{\vee} \otimes N^{\vee}$, see \cite{Johan} for example.\\

We say $M\in \mathcal{Z}_{fin}$ has a \textit{Verma flag} or \textit{standard filtration} if there is an ascending sequence of submodules $\{M_{i}\}$ such that $\bigcup M_{i}=M$, $M_0=0$, and $M_{i+1}/M_{i}$ are isomorphic to some Verma module for every $i$.
%$M\in \mathcal{O}$であれば${M_{i}}$は途中でstabilize する.$M,N\in \mathcal{O}$に対し,
%$(M\otimes N)^{\vee} \cong M^{\vee} \otimes N^{\vee}$である.
If $M\in \mathcal{O}$, ${M_{i}}$ is stable for sufficiently large $i$.\\

We write $M \in \bigtriangleup$ if $M$ has a Verma flag, $M \in \bigtriangledown$ if $M^{\vee}$ has a Verma flag, and $M\in \bigtriangleup \cap \bigtriangledown$ if $M$ and $M^\vee$ have a Verma flag.

\begin{dfn}[{\cite[Ch. 11]{Hum}}]
    $X\in \mathcal{O}$ is called a \textit{tilting module} if $M\in \bigtriangleup \cap \bigtriangledown$.\\
    $X\in \Oinf$ is called a \textit{tilting module} if all primary components are tilting modules.
\end{dfn}

\begin{fact}[{\cite[Prop. 2.6]{Johan}}]\label{Johanthm}
Let $M,N\in \mathcal{O}$. For $\mathcal{F} \in \{\bigtriangleup,\bigtriangledown,\bigtriangleup \cap \bigtriangledown\}$, one has
\begin{align*}
    M\in \mathcal{F} \Rightarrow M\otimes N \in \mathcal{F}.
\end{align*}
\end{fact}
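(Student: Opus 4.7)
The plan is to establish the case $\mathcal{F}=\bigtriangleup$ first, then deduce $\mathcal{F}=\bigtriangledown$ by duality, and obtain $\mathcal{F}=\bigtriangleup\cap\bigtriangledown$ as a combination of the two. For $\mathcal{F}=\bigtriangleup$, I would argue by induction on the length of a Verma flag $0=M_0\subset M_1\subset\cdots\subset M_r=M$: tensoring each short exact sequence $0\to M_{k-1}\to M_k\to M(\lambda_k)\to 0$ with $N$ over $\mathbb{C}$ is exact, so one is reduced to showing that $M(\lambda)\otimes N$ admits a Verma flag for each $\lambda\in\mathfrak{h}^*$ and each $N\in\mathcal{O}$. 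For this I would invoke the tensor identity
\begin{equation*}
M(\lambda)\otimes N \;\cong\; U(\mathfrak{g})\otimes_{U(\mathfrak{b})}\bigl(\mathbb{C}_\lambda\otimes_{\mathbb{C}} N|_\mathfrak{b}\bigr),
\end{equation*}
which is a standard consequence of the Hopf algebra structure of $U(\mathfrak{g})$.

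The next step is to build a $\mathfrak{b}$-stable ascending filtration $0=N_0\subset N_1\subset\cdots$ of $N|_\mathfrak{b}$ such that $\bigcup_k N_k=N$ and each quotient $N_k/N_{k-1}$ is isomorphic to a $1$-dimensional $\mathfrak{b}$-module $\mathbb{C}_{\mu_k}$. Since $N\in\mathcal{O}\subset\Oinf$, the set of weights of $N$ lies in a finite union $\bigcup_i(\lambda_i-\Gamma)$, every weight space is finite-dimensional, and $\mathfrak{n}$ strictly raises weights. I would fix a linear functional $\phi\colon\mathfrak{h}^*\to\mathbb{R}$ that is strictly positive on $\Delta^+$, enumerate the weights of $N$ in weakly decreasing order of $\phi$ (valid because $\phi$-values on $\Pi(N)$ are bounded above and tend to $-\infty$), insert the corresponding finite-dimensional weight spaces one at a time, and refine each insertion into $1$-dimensional steps. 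Each stage is $\mathfrak{b}$-stable because any $\mathfrak{n}$-raising of a vector of weight $\mu_k$ strictly increases the $\phi$-value and therefore already lies in $N_{k-1}$. Tensoring with $\mathbb{C}_\lambda$ yields a $\mathfrak{b}$-filtration of $\mathbb{C}_\lambda\otimes N$ with $1$-dimensional subquotients $\mathbb{C}_{\lambda+\mu_k}$, and applying the exact functor $U(\mathfrak{g})\otimes_{U(\mathfrak{b})}-$ (exact because $U(\mathfrak{g})$ is $U(\mathfrak{b})$-free by PBW) transports this to a Verma flag of $M(\lambda)\otimes N$ whose successive quotients are $M(\lambda+\mu_k)$.

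For $\mathcal{F}=\bigtriangledown$, I would apply the previous case to the duals: the isomorphism $(M\otimes N)^\vee\cong M^\vee\otimes N^\vee$ is recorded in the excerpt, and $\vee$ is an exact contravariant involution on $\mathcal{O}$ that interchanges $\bigtriangleup$ and $\bigtriangledown$. Thus $M\in\bigtriangledown$ gives $M^\vee\in\bigtriangleup$, hence $M^\vee\otimes N^\vee\in\bigtriangleup$ by the first case, and dualizing again produces $M\otimes N\in\bigtriangledown$. The case $\mathcal{F}=\bigtriangleup\cap\bigtriangledown$ then follows at once. The main obstacle I anticipate is the careful construction of the $\mathfrak{b}$-filtration of the typically infinite-dimensional $N$: one must simultaneously use the height function $\phi$ to ensure $\mathfrak{b}$-stability at every step and the bounded-weight together with local-$\mathfrak{n}$-finiteness of $N$ to guarantee that the ascending chain actually exhausts $N$.
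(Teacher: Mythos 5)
The paper offers no proof of this Fact---it is quoted verbatim from K{\aa}hrstr\"om---so there is nothing to compare line by line; but your argument is the standard one, and its key device (the tensor identity $M(\lambda)\otimes N\cong U(\mathfrak{g})\otimes_{U(\mathfrak{b})}(\mathbb{C}_\lambda\otimes N)$ combined with a $\mathfrak{b}$-stable filtration of $\mathbb{C}_\lambda\otimes N$ by one-dimensional quotients ordered by a height function) is precisely the technique the paper itself deploys to prove Proposition \ref{eqgro}. Your construction of that filtration is sound: adding entire weight spaces in weakly decreasing order of $\phi$ does make every stage $\mathfrak{b}$-stable, the containment $\Pi(N)\subset\bigcup_i(\lambda_i-\Gamma)$ guarantees that only finitely many weights exceed any given $\phi$-level so the chain exhausts $N$, and the deduction of the $\bigtriangledown$ case from the $\bigtriangleup$ case via $(M\otimes N)^\vee\cong M^\vee\otimes N^\vee$ is fine.

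The one step you pass over too quickly is the reduction from a general $M\in\bigtriangleup$ to $M=M(\lambda)$. Tensoring the finite Verma flag of $M$ with $N$ gives a finite filtration of $M\otimes N$ whose subquotients $M(\lambda_k)\otimes N$ each admit a Verma flag, but those flags are in general \emph{infinite} ($M\otimes N$ lives in $\Oinf$, not $\mathcal{O}$), so one cannot simply concatenate $\mathbb{N}$-indexed ascending chains: the naive concatenation is indexed by $\omega+\omega$, and an arbitrary interleaving need not have Verma subquotients. What is needed is closure of $\bigtriangleup$ under extensions inside $\Oinf$. This is true but requires an argument; the cleanest route is to pass to the $Z(\mathfrak{g})$-primary decomposition of $M\otimes N$ (available by Fact \ref{disdec} and the remark following the definition of $\Oinf$), observe that applying the exact functor $(\cdot)^{\chi}$ to your explicit flag of $M(\lambda_k)\otimes N$ yields a \emph{finite} Verma flag of each primary component (only finitely many $\mu_i$ share a given infinitesimal character, by Remark \ref{fdrem}), run the induction on the flag of $M$ componentwise where concatenation of finite flags is harmless, and finally string the countably many finite flags of the primary components into a single $\mathbb{N}$-indexed flag. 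Without some such device the phrase ``one is reduced to showing that $M(\lambda)\otimes N$ admits a Verma flag'' hides a genuine (though entirely repairable) gap.
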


It is known, see \cite[Ch. 11]{Hum}, that the $\mathfrak{g}$-module structure of a tilting module is determined by its Grothendieck group.

\subsection{Proof of Theorem \ref{generaltheorem}.}

We are ready to complete the proof of Theorem \ref{generaltheorem}.

\begin{prop}\label{eqgro}
If $\mu \in \mathfrak{h}^{*}$ and $N\in \mathcal{O}$, then one has the following isomorphism in the Grothendieck group:
\begin{align*}
[M(\mu)\otimes N]=\bigoplus_{\nu\in \Pi(N)} \dim N_{\nu}\  [M(\mu+\nu)].
\end{align*}
\end{prop}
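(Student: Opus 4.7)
The plan is to verify the identity via formal characters, using Fact \ref{Johanthm} to ensure that both sides live in a setting where Verma multiplicities can be read off from characters. First, since $M(\mu)\in \bigtriangleup$ trivially, Fact \ref{Johanthm} gives $M(\mu)\otimes N \in \bigtriangleup$, i.e.\ the tensor product carries a Verma flag. Objects of $\widetilde{\mathcal{O}}$ with a Verma flag have their class in the Grothendieck group uniquely determined by the multiplicities of each $M(\lambda)$ in the flag, because the formal characters $\mathrm{ch}(M(\lambda)) = e^{\lambda}\prod_{\alpha\in\Delta^+}(1-e^{-\alpha})^{-1}$ are linearly independent (distinct highest terms).

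Next, I would compute the formal character of the tensor product directly. Since character is multiplicative,
\begin{align*}
\mathrm{ch}(M(\mu)\otimes N)
&=\mathrm{ch}(M(\mu))\cdot \mathrm{ch}(N)
=\frac{e^{\mu}}{\prod_{\alpha\in\Delta^+}(1-e^{-\alpha})}\cdot\sum_{\nu\in\Pi(N)}\dim N_{\nu}\, e^{\nu}\\
&=\sum_{\nu\in\Pi(N)}\dim N_{\nu}\cdot \mathrm{ch}(M(\mu+\nu)).
\end{align*}
Combined with the uniqueness remark in the first paragraph, this character identity yields the claimed formula in the Grothendieck group. The sum is well-defined because $\Pi(N)\subset \bigcup_i(\lambda_i-\Gamma)$ for finitely many weights $\lambda_i$, so the collection $\{M(\mu+\nu)\}_{\nu\in\Pi(N)}$ is locally finite, matching the locally finite weight multiplicities of $M(\mu)\otimes N \in \widetilde{\mathcal{O}}$.

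An alternative route, more in line with the philosophy of the preceding subsection, is to apply Fact \ref{Kobthm} to the symmetric pair $(\mathfrak{g}\oplus\mathfrak{g},\diag(\mathfrak{g}))$ with $\mathfrak{p}=\mathfrak{b}\oplus\mathfrak{b}$, which is $\diag(\mathfrak{g})$-compatible; after identifying $\mathfrak{u}_-/(\mathfrak{u}_-\cap\diag(\mathfrak{g}))\cong\mathfrak{n}_-$ as $\mathfrak{h}$-modules, one computes $m(\delta;(\mu,\lambda))=\dim S(\mathfrak{n}_-)_{\delta-\mu-\lambda}=\dim M(\lambda)_{\delta-\mu}$, which recovers the formula for $N=M(\lambda)$; the general case follows by additivity in $N$ because Verma classes form a $\mathbb{Z}$-basis of $K_0(\mathcal{O})$.

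I do not anticipate a serious obstacle: the only point requiring care is making rigorous the passage from a character identity to an identity of classes in the Grothendieck group of $\widetilde{\mathcal{O}}$, which reduces to the observation that a module with a (possibly infinite) Verma flag is determined, in the Grothendieck group, by its character together with linear independence of Verma characters.
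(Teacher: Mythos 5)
Your argument is correct, but it takes a different route from the paper. The paper proves the proposition in an essentially self-contained way: it rewrites $M(\mu)\otimes N$ via the tensor identity as $U(\mathfrak{g})\otimes_{U(\mathfrak{b})}(\mathbb{C}_\mu\otimes N)$, lifts a $\mathfrak{b}$-filtration of $\mathbb{C}_\mu\otimes N$ with one-dimensional quotients $\mathbb{C}_{\mu_i}$ ($\mu_i\in\mu+\Pi(N)$) to a Verma flag with subquotients $M(\mu_i)$, and reads off the Grothendieck class directly from that flag; in doing so it reproves, rather than invokes, the special case of Fact \ref{Johanthm} it needs. You instead take the existence of a Verma flag on $M(\mu)\otimes N$ as given by Fact \ref{Johanthm} and identify the multiplicities by multiplicativity of formal characters together with linear independence of the characters $\mathrm{ch}\,M(\lambda)$; the local-finiteness point you raise (finitely many $\nu$ contributing to each weight, via $\Pi(N)\subset\bigcup_i(\lambda_i-\Gamma)$, cf.\ Remark \ref{fdrem}) is exactly what makes both the character product and the passage to the Grothendieck group legitimate, so there is no gap. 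The trade-off: your proof is shorter and leans on the cited external result, while the paper's construction is more informative --- it exhibits the flag explicitly, which is also what later feeds into Lemma \ref{uselem} (every primary component inherits a Verma flag). Your alternative route via Fact \ref{Kobthm} and exactness of $M(\mu)\otimes{-}$ is likewise sound and matches the Remark following the proposition, which notes that the case $N=M(\lambda)$ is a special case of \cite[Thm.~3.10]{Kob}.
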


\begin{proof}
Since $N$ is a $\mathfrak{g}$-module, one has the following isomorphism of $\mathfrak{g}$-modules.
\begin{align*}
M(\mu)\otimes N =(U(\mathfrak{g})\otimes_{U(\mathfrak{b})} \mathbb{C}_{\mathfrak{\mu}}) \otimes N \cong U(\mathfrak{g}) \otimes_{U(\mathfrak{b})} (\mathbb{C}_{\mu} \otimes N).
\end{align*} 
We take a filtration $0=B_0\subset B_1\subset B_2\subset \cdots$ of $\mathbb{C}_{\mu} \otimes N$ as a $\mathfrak{b}$-module such that $B_{i+1}/B_i$ is isomorphic to $\mathbb{C}_{\mu_i}$ for some $\mu_i \in \mu+\Pi(N)$. In turn, $\{ U(\mathfrak{g}) \otimes_{U(\mathfrak{b})} B_i \}$ is a Verma flag of $M(\mu)\otimes N$.
Then the proposition follows because $(U(\mathfrak{g})\otimes_{U(\mathfrak{b})} B_{i+1})/(U(\mathfrak{g})\otimes_{U(\mathfrak{b})} B_i) \cong M(\mu_{i})$.
\end{proof}

\begin{rem}
If $N$ is a Verma module, this is a special case of \cite[Thm. 3.10]{Kob}.
\end{rem}

As a consequence of Proposition \ref{eqgro}, we get the following corollary.

\begin{cor}\label{grocor}
Suppose that $\mu',\mu'',\nu',\nu'' \in \mathfrak{h}^{*}$ satisfy $\mu'+\mu''=\nu'+\nu''$, then one has an isomorphism in the Grothendieck group:
\begin{align*}
[M(\mu')\otimes M(\mu'')]=[M(\nu')\otimes M(\nu'')].
\end{align*}
\end{cor}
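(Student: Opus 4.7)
The plan is to apply Proposition \ref{eqgro} once and then exploit the elementary fact that the weight multiplicities of a Verma module depend only on the gap from the highest weight, not on the highest weight itself. After that the corollary follows by reading off the symmetry in $\mu'+\mu''$.

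First I would apply Proposition \ref{eqgro} with $N:=M(\mutwo)$, obtaining
\begin{align*}
[M(\muone)\otimes M(\mutwo)] = \bigoplus_{\nu\in \Pi(M(\mutwo))} \dim M(\mutwo)_\nu \,[M(\muone+\nu)].
\end{align*}
Since $\Pi(M(\mutwo))=\mutwo-\Gamma$, I would reparametrize by $\gamma\in\Gamma$ via $\nu=\mutwo-\gamma$. The crucial observation is that $\dim M(\mutwo)_{\mutwo-\gamma}$ equals Kostant's partition function $p(\gamma)$, an integer that does not depend on $\mutwo$. Substituting yields
\begin{align*}
[M(\muone)\otimes M(\mutwo)] = \bigoplus_{\gamma\in \Gamma} p(\gamma)\,[M(\muone+\mutwo-\gamma)],
\end{align*}
so the class on the right depends on $(\muone,\mutwo)$ only through the sum $\muone+\mutwo$.

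Running the same computation with $(\nu',\nu'')$ in place of $(\muone,\mutwo)$ produces the identical expression, by the hypothesis $\muone+\mutwo=\nu'+\nu''$. Equating the two gives the desired identity in the Grothendieck group. I do not anticipate any real obstacle here: all the substance is in Proposition \ref{eqgro}, and the corollary reduces to the symmetry noted above. The only point worth flagging carefully is that the weight $\nu$ ranges over an infinite set, but the sum still makes sense in the Grothendieck group of $\Oinf$ because each $\mathfrak{h}$-weight occurs in only finitely many summands, which is precisely the content of Remark \ref{fdrem} applied to $(\mathfrak{g}\oplus \mathfrak{g},\diag(\mathfrak{g}))$.
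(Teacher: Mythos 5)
Your argument is correct and is exactly the route the paper intends: the paper derives Corollary \ref{grocor} directly from Proposition \ref{eqgro}, the implicit point being precisely your observation that $\dim M(\mutwo)_{\mutwo-\gamma}$ is Kostant's partition function of $\gamma$ and hence the resulting sum depends only on $\muone+\mutwo$. You have simply spelled out the details the paper leaves tacit.
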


%前述のcor で$\lambda,\lambda^{'}:anti-dominant$とすると,$M(\lambda),M(\lambda^{'})\in M\in \bigtriangleup \cap \bigtriangledown$から,定理3を得る:
Under the assumption of Theorem $\ref{generaltheorem}$, at least one of $M(\mu')$ and $M(\mu'')$ is simple. Similarly for $M(\nu')$ and $M(\nu'')$. Therefore we obtain Theorem \ref{generaltheorem} since simple Verma modules belong to $\bigtriangleup \cap \bigtriangledown$.

\section{Indecomposable components of $M(\muone)\otimes M(\mutwo)$ in the $\sltwo$ case.}

Throughout this section, we suppose $\mathfrak{g}$ is $\sltwo(\mathbb{C})$. We prove Theorem \ref{strtheorem} and Corollary \ref{COR}.

Since each weight space of Verma modules is one-dimensional for $\sltwo(\mathbb{C})$, the following equation holds in the Grothendieck group from Proposition \ref{eqgro}.

\begin{align}\label{eqn:C}
[M(\muone)\otimes M(\mutwo)]=\bigoplus_{k\in \mathbb{N}}  [M(\muone+\mutwo-2k)].
\end{align}

Let $M^{\chi_{\nu+1}}$ denote the $Z(\mathfrak{g})$-primary component of the tensor product $M(\muone)\otimes M(\mutwo)$ for the generalized eigenvalue $\nu+1\in \mathfrak{h}^*/W \cong \Hom_{\mathbb{C}\text{-alg}}(Z(\mathfrak{g}),\mathbb{C})$. Then one has a primary decomposition:

\begin{align*}
M(\muone)\otimes M(\mutwo)\cong\bigoplus_{\nu+1 \in W\cdot (\muone+\mutwo+1-2\mathbb{N})} M^{\chi_{\nu+1}}.
\end{align*}
\noindent
We know that the $\mathfrak{g}$-module $M^{\chi_{\nu+1}}$ is of finite length, and want to determine the structure of each $M^{\chi_{\nu+1}}$ for the proof of Theorem \ref{strtheorem}.

%We recall $X= \muone+\mutwo-2\mathbb{N}$. Let $A'\equiv A'(\muone,\mutwo)$ be $[-\muone-\mutwo-2,-2]\ \cap\ X$, $A'^*:=-A-2$. Here is a summary about what we have seen about $Z(\mathfrak{g})$-primary components $M^{\chi_{\nu+1}}$.

\begin{lem}\label{uselem}
For every $\nu\in X$, $M^{\chi_{\nu+1}}$ has a Verma flag. Moreover, in the Grothendieck group, one has
\begin{align*}
[M^{\chi_{\nu+1}}]=
\begin{cases}
[M(\nu)+M(-\nu-2)] &  \nu \in A'\cup A'^*,\\
[M(\nu)] & \nu \notin A'\cup A'^*.
\end{cases}
\end{align*}

\end{lem}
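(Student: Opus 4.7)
The plan is to combine Fact \ref{Johanthm} (which guarantees the existence of a Verma flag on the tensor product) with the Grothendieck-level identity \eqref{eqn:C}, and then to split the result into primary components according to infinitesimal character.

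First I would invoke Fact \ref{Johanthm} with $M(\muone) \in \bigtriangleup$: this yields that $T := M(\muone)\otimes M(\mutwo)$ lies in $\bigtriangleup$, i.e., admits a (possibly infinite) ascending filtration $T_0 \subset T_1 \subset \cdots$ of $\mathfrak{g}$-submodules with $\bigcup_i T_i = T$ and each successive quotient $T_{i+1}/T_i$ isomorphic to some $M(\lambda_i)$. I would then intersect this filtration with the primary component $T^{\chi_{\nu+1}}$: because each $T_i$ is a $\mathfrak{g}$-submodule of a locally $Z(\mathfrak{g})$-finite module, it inherits the primary decomposition $T_i = \bigoplus_{\chi}(T_i \cap T^{\chi})$. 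Consequently each subquotient
\begin{equation*}
(T_{i+1}\cap T^{\chi_{\nu+1}})/(T_i \cap T^{\chi_{\nu+1}}) \hookrightarrow T_{i+1}/T_i \cong M(\lambda_i)
\end{equation*}
is either zero (when $\chi_{\lambda_i+1} \neq \chi_{\nu+1}$) or all of $M(\lambda_i)$, and after discarding zero steps one obtains a genuine Verma flag on $T^{\chi_{\nu+1}}$.

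For the Grothendieck-class identity, I would read off the multiplicities directly from \eqref{eqn:C}: the class $[T]$ equals $\bigoplus_{\lambda \in X}[M(\lambda)]$. The Vermas contributing to $[T^{\chi_{\nu+1}}]$ are precisely those $M(\lambda)$ with $\chi_{\lambda+1} = \chi_{\nu+1}$; in the $\sltwo$ case the Weyl group $W = \{\pm 1\}$ acts on $\mathfrak{h}^* = \mathbb{C}$ by sign change, so $\chi_{\lambda+1} = \chi_{\nu+1}$ holds exactly when $\lambda \in \{\nu, -\nu-2\}$. Hence
\begin{equation*}
[T^{\chi_{\nu+1}}] = \sum_{\lambda \in X \cap \{\nu, -\nu-2\}} [M(\lambda)].
\end{equation*}

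A short case analysis produces the claimed dichotomy: when both $\nu$ and $-\nu-2$ belong to $X$ and are distinct (i.e.\ $\nu \neq -1$) we pick up two Vermas, which is exactly the condition $\nu \in A' \cup A'^*$; in all other cases either $-\nu-2 \notin X$ or $\nu = -1$ is a $W$-fixed point, leaving only $[M(\nu)]$. The most delicate point is the compatibility in the second step between the Verma flag on $T$ and the primary decomposition; this rests on the functoriality of the primary decomposition on locally $Z(\mathfrak{g})$-finite modules, which ensures that every $\mathfrak{g}$-submodule inherits the decomposition. Once that is in hand, neither of the two conclusions—the existence of a Verma flag on each primary component, and the explicit Grothendieck class—requires anything beyond bookkeeping.
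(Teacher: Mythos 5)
Your proposal is correct and follows essentially the same route as the paper: the paper likewise deduces the Verma flag on each primary component from Fact \ref{Johanthm} (asserting, as you verify in detail, that a Verma flag on $M(\mu')\otimes M(\mu'')$ restricts to one on each $M^{\chi_{\nu+1}}$), and obtains the Grothendieck identity by sorting the summands of \eqref{eqn:C} according to the $W$-orbit $\{\nu,-\nu-2\}$ of infinitesimal characters, identifying the two-Verma case with $\nu\in X\cap X^*\setminus\{-1\}=A'\cup A'^*$. Your spelled-out argument that the filtration intersected with the primary component has subquotients equal to $0$ or $M(\lambda_i)$ is exactly the justification the paper leaves implicit.
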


\begin{proof}

Since $M(\muone)\otimes M(\mutwo)$ has a Verma flag, so does every component $M^{\chi_{\nu+1}}$, whence the first assertion. The second assertion is derived from $\eqref{eqn:C}$ because
\begin{align*}
    X\cap X^*\setminus \{-1\} = A'\cup A'^*
\end{align*} is the set of $\nu\in X$ such that $\nu\neq \nu^*$ and that $[M(\nu^*)]$ occurs in $\eqref{eqn:C}$.
\end{proof}

We also use the following lemma \cite[Lem. 9.4]{KP}:

\begin{lem}\label{uselemtwo}
    For all $k\in \mathbb{N}$ and for all $\mu',\mu''\in \mathbb{C}$,
    \begin{align*}
        \Hom_{\mathfrak{g}}(M(\mu'+\mu''-2k),M(\mu')\otimes M(\mu''))\neq \{0\}.
    \end{align*}
\end{lem}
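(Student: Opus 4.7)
The plan is to produce a nonzero singular vector $w$ of weight $\mu'+\mu''-2k$ inside $M(\mu')\otimes M(\mu'')$; the universal property of the Verma module $M(\mu'+\mu''-2k)$ then yields the desired nonzero homomorphism via the extension of $v_{\mu'+\mu''-2k}\mapsto w$ to a $\mathfrak{g}$-map.

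The key observation is that, under the identification $\mathfrak{h}^{*}\cong\mathbb{C}$ in which the positive root is $2$, each weight space of a Verma module $M(\mu)$ is one-dimensional, spanned by $f^{m}v_{\mu}$ in weight $\mu-2m$. Consequently the weight space
\[
W_{k}:=(M(\mu')\otimes M(\mu''))_{\mu'+\mu''-2k}
\]
is spanned by the $k+1$ vectors $\{f^{j}v_{\mu'}\otimes f^{k-j}v_{\mu''}\}_{0\le j\le k}$, while the adjacent weight space $W_{k-1}$ has dimension $k$. Since the positive root vector $e\in\sltwo$ raises weights by $2$, it restricts to a $\mathbb{C}$-linear map $e\colon W_{k}\to W_{k-1}$, and rank--nullity gives $\dim\ker(e|_{W_{k}})\ge(k+1)-k=1$. (For $k=0$ the whole space $W_{0}$ is annihilated by $e$, corresponding to the canonical embedding $M(\mu'+\mu'')\hookrightarrow M(\mu')\otimes M(\mu'')$.) Any nonzero $w$ in this kernel is a singular $\mathfrak{b}$-vector of weight $\mu'+\mu''-2k$, and the universal property then delivers the required nonzero $\mathfrak{g}$-homomorphism.

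I do not expect a genuine obstacle, because the entire argument reduces to a linear-algebra dimension count that is uniform across $(\mu',\mu'')\in\mathbb{C}^{2}$; in particular no case distinction on the integral loci is required. As a sanity check, the more direct alternative---solving the three-term recursion for coefficients $c_{j}$ of $w=\sum_{j}c_{j}f^{j}v_{\mu'}\otimes f^{k-j}v_{\mu''}$ coming from $ew=0$---does require case-splitting when $\mu'\in\{0,1,\ldots,k-1\}$, since a naive normalization can make the explicit closed-form expression vanish identically even though the lemma still holds. This confirms that the abstract dimension-count route is both simpler and safer.
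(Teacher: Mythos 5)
Your proof is correct and is essentially the same as the paper's: both count $\dim(M(\mu')\otimes M(\mu''))_{\mu'+\mu''-2k}=k+1$ against the dimension $k$ of the adjacent weight space, conclude that $e$ (equivalently $\diag(\mathfrak{n})$) has nontrivial kernel there, and convert the resulting singular vector into a nonzero $\mathfrak{g}$-map via the universal property of the Verma module. No gaps; the remark about avoiding the explicit recursion is a sensible aside but not needed.
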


For the sake of completeness, we give a direct proof. The dimension of weight spaces is given as
\begin{align*}
    \dim (M(\muone)\otimes M(\mutwo))_{\muone +\mutwo-2k}=k+1.
\end{align*}
\noindent
Since $\diag(\mathfrak{n})$ maps $(M(\muone)\otimes M(\mutwo))_{\muone +\mutwo-2k}$ to $(M(\muone)\otimes M(\mutwo))_{\muone +\mutwo-2k+\nu}$, it has a non-trivial kernel, namely, there exists a non-zero singular vector in $(M(\muone)\otimes M(\mutwo))_{\muone +\mutwo-2k}$.\\

By Lemma \ref{uselem} and \ref{uselemtwo}, one has an isomorphism $M^{\chi_{\nu+1}} \cong M(\nu)$ for $\nu \in X\setminus (A'\cup A'^*)$. We note that $\chi_{\nu+1}=\chi_{\nu^* +1}$ where $\nu^*=-\nu-2$. What remains is to determine the structure of $M^{\chi_{\nu+1}}$ for $\nu \in A'$. We compute $M^{\chi_{\nu+1}}$ for $\nu\in A'$ in by case-by-case argument as below.

\subsection{Generic case.}\label{genericcase}

We begin with the case $\muone +\mutwo \notin \mathbb{N}$. Every Verma module $M(\muone+\mutwo-2k)$ is irreducible, and the set of (generalized) eigenvalues $\{\mu'+\mu''-2k+1 : k\in \mathbb{N}\}$ are distinct from each other. Hence the equation of the Grothendieck group \eqref{eqn:C} is the isomorphism of modules.

\subsection{Singular case 1.}

We assume $\muone+\mutwo \in \mathbb{N}$ but at least one of $\mu'$ or $\mu''$ does not belong to $\mathbb{N}$. We observe that $A'$ coincides with $A(\mu',\mu'')$ in Theorem \ref{strtheorem}. On the other hand, $M(\muone)\otimes M(\mutwo)$ is a tilting module by Fact \ref{Johanthm}.
Suppose $\nu\in A'$. Since $[M^{\chi_{\nu+1}}]=[M(\nu)]+[M(-\nu-2)]=[P(\nu)]$ by Example \ref{pjcoverex} and Lemma \ref{uselem}, we conclude
$M^{\chi_{\nu+1}} \cong P(\nu)$ because both the primary component $M^{\chi_{\nu+1}}$ and the projective covering $P(\nu)$ are tilting modules. Hence Theorem \ref{strtheorem} is shown in this case.

\subsection{Singular case 2.}

Finally, we assume $\muone,\mutwo \in \mathbb{N}$.
Without loss of generality, we may assume $\muone \geq \mutwo$.
One has the following short exact sequence:
\begin{align*}
0\rightarrow M(\muone)\otimes M(-\mutwo-2) \rightarrow M(\muone)\otimes M(\mutwo)\rightarrow M(\muone)\otimes L(\mutwo) \rightarrow 0.
\end{align*}
%今,$M(\muone)\otimes M(-\mutwo-2)$の構造はわかっている.また,$M(\muone)\otimes L(\mutwo)$は射影加群であるから,上の短完全列は分裂する.以上より定理が従う.
We have already determined the structure of $M(\muone)\otimes M(-\mutwo-2)$ in the previous cases. We note that the set $A(\mu',\mu'')$ in Theorem \ref{strtheorem} is equal to $A'(\mu',-\mu''-2)$ since $-\muone-(-\mutwo-2)-2=-|\muone-\mutwo |$. Moreover, since $M(\muone)\otimes L(\mutwo)$ is a projective module, the above short exact sequence splits. In view of a Verma flag of $M(\muone)\otimes L(\mutwo)$ one has the following direct sum decomposition
\begin{align*}
M(\muone)\otimes L(\mutwo) \cong \bigoplus_{k=0}^{\mutwo} M(\muone+\mutwo -2k)
\end{align*}
\noindent
because the $Z(\mathfrak{g})$-infinitesimal characters in the summands are all distinct.
Thus the proof of Theorem \ref{strtheorem} is completed.

%We write $A:=[-|\muone-\mutwo |,-2]\ \cap\ X$ and set $X$, $A^*$, and $B$ the same in the singular case 1. This $A$ is just how we replace $\mutwo$ with $-\mutwo-2$ in the singular case 1 since $-\muone-(-\mutwo-2)-2=-|\muone-\mutwo |$.\\

\subsection{Proof of the equivalence (ii)$\Leftrightarrow$(iv) in Theorem \ref{KPtheorem}.}

Suppose $\Resub \nu \leq 0$. By Theorem \ref{strtheorem} and its proof,
\begin{align*}
(M(\nu’)\otimes M(\nu^{\prime \prime}))^{\chi_{\nu+1}}=M(\nu)\oplus M(-\nu-2)
\end{align*}
If and only if $\nu \in A’\setminus A$. Moreover,
\begin{align*}
A’\setminus A=\{-|\mu’-\mu^{\prime \prime}|-2j-2: 0\leq j\leq \min(\mu’.\mu^{\prime \prime}) \}
\end{align*}
If $\mu’,\mu^{\prime \prime} \in \mathbb{N}$, and $A’\setminus A=\emptyset$ otherwise. Hence the equivalence (ii)$\Leftrightarrow$(iv) in Theorem \ref{KPtheorem} is shown by putting $\lambda^{\prime \prime \prime}=-\nu$.

\setcounter{thm}{0}
\setcounter{section}{3}

\begin{exam}
We cannot drop the condition $2\leq \lambdathree$ in (iv) in Theorem \ref{KPtheorem}.
For instance, take $(\lambdaone,\lambdatwo,\lambdathree)$ to $(-1,-3,-2)$. From Theorem \ref{strtheorem}, we have
\begin{align*}
(M(1)\otimes M(3))^{\chi_{2+1}}=M(2)\oplus M(-4),
\end{align*}
but $-2< |-1-(-3)| +2$. It does not satisfy (ii).
\end{exam}

\subsection{Proof of Corollary \ref{COR}}

\begin{proof}
    The implication (i)$\Rightarrow$(ii) is obvious. We recall $B\equiv B(\mu',\mu'')$ from Theorem \ref{strtheorem}, which tells us that the condition (iii) is equivalent to $B\cap \mathbb{N}=\emptyset$ because the projective covering $P(a)$ and irreducible $M(b)$ are self-dual. On the other hand, $B\cap \mathbb{N}\neq \emptyset$ if and only if $\mu',\mu'' \in \mathbb{N}$, and in this case both $\nu$ and $\nu^*=-\nu-2$ belong to 
    \begin{align*}
        B\cap \mathbb{N}&=\{|\mu'-\mu''|+2j:0\leq j\leq \min(\mu',\mu'') \}\\
        &=\{-\nu-2:\nu\in A'\setminus A\}.
    \end{align*}
    By Theorem \ref{strtheorem}, $\dim \Hom (M(-\nu-2),M(\mu')\otimes M(\mu''))=\dim \Hom(M(-\nu-2),M(-\nu-2)\oplus M(\nu))=2$ for any $\nu \in B\cap \mathbb{N}$. Hence the implication (ii)$\Rightarrow$(iii) is shown. On the other hand, it follows from Theorem \ref{strtheorem} that indecomposable summand of $M(\mu')\otimes M(\mu'')$ are given as
    \begin{align*}
        \bigoplus_{a\in A} P(a) \oplus \bigoplus_{b\in B\cap \mathbb{N}} M(b) \oplus \bigoplus_{b\in B\setminus \mathbb{N}} M(b).
    \end{align*}
    Since $\{b^*:b\in B\cap\mathbb{N}\}\subset B\setminus \mathbb{N}$, this direct sum is multiplicity-free if and only if $B\cap \mathbb{N}=\emptyset$, hence one has the equivalence (i)$\Leftrightarrow$(iii). By Theorem \ref{strtheorem}, $M(\mu')\otimes M(\mu'')$ contains a reducible Verma module as a direct summand if and only if $B\cap \mathbb{N}=\emptyset$, hence (iii)$\Leftrightarrow$(iv) is shown.
\end{proof}

%\begin{acknowledgement}

%\end{acknowledgement}

%\section{Reference}

\nocite{KØSS}

\bibliography{sample} % 拡張子を外したbibファイル
\bibliographystyle{amsplain} % 参考文献出力のスタイル

\end{document}